\newtheorem{thm}{Theorem}[section]
\newtheorem{cor}[thm]{Corollary}
\newtheorem{lem}[thm]{Lemma}
\newtheorem{clm}[thm]{Claim}
\newtheorem{prop}[thm]{Proposition}
\newtheorem{conj}[thm]{Conjecture}
\theoremstyle{remark}
\newtheorem{rmk}[thm]{Remark}
\theoremstyle{definition}
\newtheorem{defi}[thm]{Definition}
\def \E {\mathcal{E}}
\def \F {\mathcal{F}}
\def \O {\mathcal{O}}
\def \bp {\bar{\partial}}
\def\e {\underline e}
\def \C {\mathbb C}
\def \Z {\mathbb Z}
\def \R {\mathbb R}
\def \F {\mathcal F}
\def \E {\mathcal E}
 \def \Q {\mathcal Q}
\def \P {\mathbb P}
\def \H {\mathcal H}
\def \bp {\bar{\partial}}
\def \R {\mathcal{R}}
\def \O {\mathcal{O}}
\def \l0 {\lim_{r\rightarrow 0}}
\def \Id {\text{Id}}
\def \Sing {\text{Sing}}
\def \G {\mathcal G}
\def \I {\mathcal I}
\DeclareMathOperator{\rk}{rank}
\def \C {\mathbb C}
\def \Z {\mathbb Z}
\def \R {\mathbb R}
\def \F {\mathcal F}
 \def \Q {\mathcal Q}
\def \P {\mathbb P}
\def \H {\mathcal H}
\def \bp {\bar{\partial}}
\def \R {\mathcal{R}}
\def \O {\mathcal{O}}
\def \l0 {\lim_{r\rightarrow 0}}
\def \E {\mathcal{E}}
\numberwithin{equation}{section}
\begin{document}

\title{Algebraic tangent cones of reflexive sheaves}
\author{Xuemiao Chen\thanks{xuemiao.chen@stonybrook.edu}, Song Sun\thanks{sosun@berkeley.edu. }}

\maketitle
\begin{abstract}
We study the notion of algebraic tangent cones at singularities of reflexive sheaves. These correspond to extensions of reflexive sheaves across a negative divisor. We show the existence of optimal extensions in a constructive manner, and we prove the uniqueness in a suitable sense. The results here are an algebro-geometric counterpart of our previous study on singularities of Hermitian-Yang-Mills connections.
\end{abstract}
\section{Introduction}
The goal of this paper is to study a complex-algebraic object that comes out of  our study of singularities of Hermitian-Yang-Mills connections (\cite{CS1, CS2}). The discussion here will be purely complex-algebraic, and the connection with the previous results will be given by Conjecture \ref{Conjecture-1.6}.  Let $B\subset \C^n$ be the unit ball, and let $\E$ be  an  reflexive  analytic coherent sheaf over of $B$. Let $\hat B$ be the blow-up of $B$ at $0$. We will use the following notation

\begin{itemize}
\item $\pi: B\setminus \{0\}\rightarrow \C\P^{n-1}$, and  $\pi': \C^n\setminus\{0\}\rightarrow\C\P^{n-1}$ are the natural projection maps;
\item $\psi: B\setminus\{0\}\rightarrow B$ and $\psi': \C^n\setminus\{0\}\rightarrow \C^n$ are the natural inclusion maps;
\item $p: \hat B\rightarrow B$ and $\phi: \hat B\rightarrow \C\P^{n-1}$ are the natural projection maps;
\item $D:=p^{-1}(0)$ is the exceptional divisor, and $\iota: D\rightarrow \hat B$ is the natural inclusion map. 
\end{itemize}

\begin{defi}
\begin{itemize}
\item An \emph{extension} of $\E$ at $0$ is a reflexive sheaf $\hat\E$ over $\hat B$  such that $\hat \E|_{\hat B\setminus D}$ is isomorphic to $(p^*\E)|_{\hat B \setminus D}$. Define $\mathcal A$ to be the set of  isomorphism classes of all extensions of $\E$ at $0$;
\item An \emph{algebraic tangent cone} of $\E$ at $0$ is a torsion-free sheaf  $\underline{\hat{\E}}$ over $D$ such that  $\underline{\hat{\E}}=\iota^*\hat\E$ for some $\hat \E \in \mathcal{A}$.
\end{itemize}
\end{defi}

To justify the terminology ``algebraic tangent cone", we notice that $\psi'_*\pi'^*\underline{\hat \E}$ defines a torsion-free sheaf on $\C^n$ with a natural $\C^*$ equivariant action. It would be more natural to call $\psi'_*\pi'^*\underline{\hat \E}$ the algebraic tangent cone, but we have chosen to call $\underline{\hat \E}$ itself an algebraic tangent cone just for convenience of our presentation.  In Conjecture \ref{Conjecture-1.6} below we shall also make connection with \emph{analytic tangent cones} of Hermitian-Yang-Mills connections. 

We remark that $\mathcal A$ is easily seen to be non-empty for one can simply take $(p^*\E)^{**}$ as an extension of $\E$ at $0$. Since the divisor line bundle $[D]$ is trivial on $\hat B\setminus D$, we know that if $\hat \E$ is an extension, then $\hat\E\otimes [D]^{\otimes k}$ is also an extension for all $k\in \Z$. In particular,  if $\underline{\hat \E}$ is an algebraic tangent cone, so is $\underline{\hat \E}\otimes \O(k)$ for all $k\in \Z$. It is easy to see that $\psi'_*\pi'^*(\underline{\hat \E}\otimes \O(k))$ is isomorphic to $\psi'_*\pi'^*\underline{\hat{\E}}$. 

\begin{defi}
Two extensions ${\hat \E}_1$ and ${\hat\E}_2$  of $\E$ at $0$ are \emph{equivalent} if ${\hat\E}_1$ is isomorphic to ${\hat \E}_2\otimes [D]^{\otimes k}$ for some $k\in \Z$.
\end{defi}

Since $D$ is of co-dimension one in $\hat B$, in general $\mathcal A$ consists of more than one element. For example, as we shall show in Proposition \ref{Hecktransform} given any extension $\hat\E$, then a saturated subsheaf of $\underline{\hat\E}$ determines a \emph{Hecke transform} of $\hat\E$ (see Definition \ref{Definition2.3}), which in general may be different from $\hat\E$. Our goal is to define and find an \emph{optimal} extension in the following sense.

Given any $\hat \E\in\mathcal A$, we let $0\subset \underline\E_1\subset \cdots \underline\E_m \subset \underline{\hat\E}$ be the Harder-Narasimhan filtration of $\underline{\hat\E}$ (with respect to the obvious polarization $\O(1)\rightarrow \C\P^{n-1})$. Denote by $\mu_k$ the slope of $\underline \E_k/\underline \E_{k-1}$ which is strictly decreasing in $k$. We define a function
$\Phi: \mathcal A\rightarrow \mathbb Q_{\geq0}$ by setting  
$$\Phi(\hat\E)=\mu_1-\mu_m. $$
Then $\underline{\hat \E}$ is semistable if and only if $\Phi(\hat\E)=0$. In general $\Phi(\hat\E)$ measures the deviation of  the algebraic tangent cone $\underline{\hat\E}$ from being semistable. The naive goal is to find an extension so that the algebraic tangent cone is semistable. However, by Theorem \ref{main} below, it is easy to see this can not be achieved in general.   Instead we make the following definition

\begin{defi}
We say an extension $\hat\E$ is
\begin{itemize}
\item \emph{optimal} if $\Phi(\hat\E)\in [0, 1)$;
\item \emph{semistable} if $\Phi(\hat\E)=0$.
\end{itemize}
\end{defi}

For any torsion-free sheaf $\underline\F$ on $\C\P^{n-1}$ we denote by $Gr(\underline\F)$ the graded torsion-free sheaf associated to the Harder-Narasimhan filtration of $\underline\F$ and by $Gr^{HNS}(\underline \F)$ the graded torsion-free sheaf associated to a Harder-Narasimhan-Seshadri filtration of $\underline\F$. The main result we shall prove is 
\begin{thm}\label{main}
Given a reflexive coherent  sheaf $\E$ on $B$, the following holds
\begin{enumerate}[(I).]
\item An optimal extension always exists. More precisely, given any $\hat \E\in \mathcal A$, there are finitely many Hecke transforms that transform $\hat\E$ into an optimal one;
\item Suppose $\hat \E_1\in\mathcal{A}$ and $\hat\E_2\in \mathcal A$ satisfy that $\Phi(\hat\E_1)+\Phi(\hat\E_2)<1$, then $\hat\E_1$ and $ \hat\E_2$ are equivalent. In particular, if there is one semistable extension, then it is the unique optimal extension up to equivalence;
\item Suppose $\hat \E_1$ and $\hat\E_2\in\mathcal A$ are both optimal extensions,  then there is a $k\in \Z$ such that $\hat \E_1$ and $\hat \E_2\otimes [D]^{\otimes k}$ differ by a Hecke transform of special type (see Definition \ref{defi3.4}). In particular,  
$$\psi'_*\pi'^*(Gr(\underline{\hat\E_1}))\simeq \psi'_*\pi'^*(Gr(\underline{\hat \E_2}));$$
\item Suppose $\E$ is homogeneous, i.e. $\E\simeq \psi_*\pi^*\underline \E$ for some reflexive sheaf $\underline \E$ on $\C\P^{n-1}$, then there exists an optimal extension $\hat{\E}\in \mathcal{A}$ with
$$
\underline{\hat{\E}}\cong\widetilde{Gr}(\underline \E),$$
where $\widetilde{Gr}(\underline \E)$ denotes the graded sheaf determined by the partial Harder-Narasimhan filtration of $\underline \E$ (see Section \ref{proofIV}). In particular, $$\psi'_*\pi'^*(Gr^{HNS}(\underline{\hat\E}))\simeq \psi'_*\pi'^*(Gr^{HNS}(\underline \E)).$$
\end{enumerate}
\end{thm}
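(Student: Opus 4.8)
\emph{The common engine.} The plan is to run all four parts through the Hecke transform of Definition~\ref{Definition2.3} and its effect on the algebraic tangent cone. Writing $\mathcal N_D=\iota^*[D]=\O(-1)$ for the normal bundle of $D\cong\C\P^{n-1}$, I would first record the elementary-modification formula: if $\hat\E'$ is the Hecke transform of $\hat\E$ along a saturated subsheaf $\underline{\mathcal S}\subset\underline{\hat\E}$ with quotient $\underline{\mathcal Q}=\underline{\hat\E}/\underline{\mathcal S}$, then restricting the defining sequence $0\to\hat\E'\to\hat\E\to\iota_*\underline{\mathcal Q}\to0$ to $D$ and keeping track of the twist by $\mathcal N_D^\vee=\O(1)$ coming from the conormal bundle gives
$$0\to\underline{\mathcal Q}(1)\to\underline{\hat\E}'\to\underline{\mathcal S}\to0.$$
Thus a Hecke transform lifts the slopes of the quotient part by exactly $1$. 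Combined with the standard bounds $\mu_{\max}(E)\le\max(\mu_{\max}(A),\mu_{\max}(B))$ and $\mu_{\min}(E)\ge\min(\mu_{\min}(A),\mu_{\min}(B))$ for an extension $0\to A\to E\to B\to0$, this is the only tool needed to track $\Phi$.

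\emph{Part (I).} Given $\hat\E$ with $\Phi(\hat\E)\ge1$, I would Hecke transform along the top Harder--Narasimhan piece $\underline{\mathcal S}=\underline\E_1$ (slope $\mu_1$). The quotient has graded slopes $\mu_2>\cdots>\mu_m$, which the formula above lifts to $\mu_2+1>\cdots>\mu_m+1$; since $\mu_1\ge\mu_m+1$ the new extension satisfies $\mu_{\min}(\underline{\hat\E}')\ge\mu_m+1$ and $\mu_{\max}(\underline{\hat\E}')\le\max(\mu_1,\mu_2+1)$, whence $\Phi(\hat\E')\le\max(\Phi-1,\ \mu_2-\mu_m)<\Phi$. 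As the HN slopes of any sheaf of rank $\le\rk\E$ lie in $\tfrac1L\Z$ for $L=\mathrm{lcm}(1,\dots,\rk\E)$, each such step decreases $\Phi$ by at least $1/L$; hence after finitely many Hecke transforms $\Phi<1$ and the extension is optimal.

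\emph{Parts (II) and (III).} Fix extensions $\hat\E_1,\hat\E_2$. The isomorphism away from $D$ extends, after replacing $\hat\E_2$ by an equivalent $\hat\E_2\otimes[D]^{\otimes k}$, to an injection $f\colon\hat\E_1\to\hat\E_2$ that is an isomorphism off $D$ and nonzero on $D$; let $g\colon\hat\E_2\to\hat\E_1(bD)$ be the minimal extension of the inverse, so $g f=s^b\cdot\Id$ for the canonical section $s$ of $[D]$. Restricting to $D$ gives nonzero maps $\bar f\colon\underline{\hat\E}_1\to\underline{\hat\E}_2$ and $\bar g\colon\underline{\hat\E}_2\to\underline{\hat\E}_1(-b)$, using $\iota^*[D]=\O(-1)$. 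A nonzero sheaf map forces $\mu_{\min}(\text{source})\le\mu_{\max}(\text{target})$; applied to $\bar f$ and $\bar g$ this yields $\mu_m^{(1)}\le\mu_1^{(2)}$ and $\mu_m^{(2)}+b\le\mu_1^{(1)}$, and eliminating the extreme slopes via $\Phi_i=\mu_1^{(i)}-\mu_m^{(i)}$ gives the clean estimate
$$b\le\Phi(\hat\E_1)+\Phi(\hat\E_2).$$
For (II), $\Phi_1+\Phi_2<1$ forces $b=0$, so $f$ is an isomorphism off $D$ with regular inverse, hence an isomorphism, and $\hat\E_1,\hat\E_2$ are equivalent; the semistable statement is the special instance $\Phi_1=0$. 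For (III), two optimal extensions give $b\le\Phi_1+\Phi_2<2$, so $b\in\{0,1\}$; when $b=1$, injectivity of $f$ with cokernel $\iota_*\underline{\mathcal Q}$ exhibits $\hat\E_1$ as a single Hecke transform of $\hat\E_2$ along $\ker(\underline{\hat\E}_2\to\underline{\mathcal Q})$, and the now-saturated slope inequalities force the modifying sub/quotient to be assembled from HN pieces, i.e. a Hecke transform of the special type of Definition~\ref{defi3.4}. Since such a transform only twists some HN graded pieces by $\O(\pm1)$, which $\psi'_*\pi'^*$ annihilates, one obtains $\psi'_*\pi'^*(Gr(\underline{\hat\E_1}))\simeq\psi'_*\pi'^*(Gr(\underline{\hat\E_2}))$.

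\emph{Part (IV) and the main difficulty.} When $\E\simeq\psi_*\pi^*\underline\E$ is homogeneous, $\hat B$ carries the bundle projection $\phi\colon\hat B\to\C\P^{n-1}$ (the total space of $\O(-1)$), and $\phi^*\underline\E$ is an extension with $\iota^*\phi^*\underline\E=\underline\E$. Running the reduction of part (I) along the pullbacks of the HN pieces, the homogeneity (equivalently the $\C^*$-action used to form $\psi'_*\pi'^*$) lets one choose the successive elementary modifications $\C^*$-equivariantly, so that the iterated extensions split and the tangent cone becomes the graded sheaf $\widetilde{Gr}(\underline\E)$ whose pieces are the $\O(\pm1)$-twists of $Gr(\underline\E)$ pushed into a slope window of width $<1$; this extension is optimal by construction. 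The concluding isomorphism then follows because $\widetilde{Gr}(\underline\E)$ and $Gr^{HNS}(\underline\E)$ differ only by such twists, which disappear under $\psi'_*\pi'^*$. I expect the genuinely delicate step to be Part (III): upgrading the borderline case $b=1$ from ``a single Hecke transform'' to ``a Hecke transform of special type,'' i.e. showing the optimality hypotheses force the modifying subsheaf to be simultaneously compatible with both Harder--Narasimhan filtrations; the analogous equivariant splitting needed in Part (IV) is the second main point.
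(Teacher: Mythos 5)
Your overall strategy matches the paper's: (I) via a Hecke transform along the top Harder--Narasimhan piece together with discreteness of $\Phi$, (II)--(III) by extending the off-$D$ isomorphism and playing the two restricted maps against the slope bounds, and (IV) by iterated Hecke transforms of $\phi^*\underline\E$ along pullbacks of the partial HN pieces, with homogeneity producing the splittings. But the two steps you pass over are exactly where the substance lies. The first is your opening move in (II)--(III): the claim that the isomorphism $\hat\E_1|_{\hat B\setminus D}\simeq\hat\E_2|_{\hat B\setminus D}$ ``extends, after twisting by $[D]^{\otimes k}$, to an injection $f:\hat\E_1\to\hat\E_2$.'' This is not automatic: a holomorphic section of $\hat\E_1^*\otimes\hat\E_2$ on $\hat B\setminus D$ can a priori have an essential singularity along $D$ (already for $\O$ on $\Delta\setminus\{0\}\subset\C$), and the paper isolates the needed meromorphy as Proposition \ref{prop3-7}, proved for $n=2$ by globally generating $\hat\E^*(-k[D])$ with H\"ormander's $L^2$ estimates using the negativity of $[D]|_D=\O(-1)$, and for $n\geq 3$ by restricting to lines $\phi^{-1}(\C\P^1)$ and invoking Siu's Hartogs-type theorem and Shiffman's extension across sets of codimension four. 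This is the one place the exceptionality of $D$ enters; without it (II) and (III) do not start, and your proposal gives no argument for it.

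The second gap is the step you yourself flag as ``delicate'' in (III) and then leave as an expectation. Optimality and slope inequalities alone only give the inclusion $\text{Im}(\bar g)\subset\underline\E_k$; to conclude that the modifying subsheaf \emph{is} the HN step $\underline\E_k$, the paper (Claim \ref{exact}) must also show (i) that the cokernel of $f$ is scheme-theoretically supported on $D$, i.e.\ killed by $\I_D$, which follows from $g\circ f=s\cdot\Id$, and (ii) that $\text{Im}(\bar g)=\underline\E_k$ generically, which comes from Taylor-expanding $g=A_0+A_1z_n+\cdots$ and $sf=B_0+B_1z_n+\cdots$ in the normal coordinate and reading off $A_0B_1+A_1B_0=\Id$, whence $\rk\bar g(z)+\rk\bar f(z)\geq\rk\E$ pointwise away from a codimension-two set. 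That pointwise rank identity is a piece of linear algebra, not a slope estimate, and it is the missing idea. Parts (I) and (IV) of your sketch are essentially the paper's arguments in outline --- in (IV) the ``$\C^*$-equivariance'' is implemented concretely as the natural inclusion $\phi^*\underline\E_{j_1}\subset\hat\E^{1,1}$, which is what splits the restricted sequences --- but (IV) as written still needs the inductive bookkeeping with the indices $j_k$ and twists $n_k$ to be carried out.
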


\begin{rmk}
It is very crucial that the normal bundle of $D$ is negative in our case but it is not crucial that $D$ is $\C\P^{n-1}$.
\end{rmk}

\begin{rmk}
The above result also yields some tree-like structure on $\mathcal A$, which does not seem obvious to see without using the notion of optimal extensions. Also notice $\mathcal A$ itself may contain continuous moduli. For example, in the case when $n=2$ and $\E$ is the trivial rank 2 sheaf on $\C^2$, any extension $\hat\E$ restricts to $\O(k_1)\oplus \O(-k_2)$ on $\C\P^1$ for some $k_1,k_2\in\mathbb{Z}_{\geq 0}$. By \cite{Gasparim}, under the restriction $k_1=k_2>1$, there is a generically $2k_1-3$ dimensional moduli of isomorphism classes of extensions. 
\end{rmk}

We give here a simple example illustrating the above statements, and we refer to Section \ref{Examples} for more examples.  Let $\underline\F$ be the locally free sheaf given by $\O\oplus \mathcal T\C\P^{n}$ for $n\geq 2$, and let $\E=\psi_*\pi^*\underline\F$. Then $\hat \E_1:=\phi^*\underline \E$ is an extension of $\E$ and the corresponding algebraic tangent cone is  $\underline{\hat\E}_1=\underline \F$. Since $\Phi(\hat \E_1)=\frac{n+1}{n}$, we know $\hat\E_1$ is not optimal. Applying Hecke transform to $\hat \E_1$ along the subsheaf $\mathcal T\C\P^{n}$ (which is fairly trivial in this case), we get a new extension $\hat \E_2$ with $\underline{\hat\E}_2=\mathcal T\C\P^{n}\oplus \O(1)$. Since $\Phi(\hat \E_2)=\frac{1}{n}\in [0, 1)$, $\hat \E_2$ is also an optimal extension. Moreover, by (II) above, there is \emph{no} semistable extension of $\E$.  Also the strict uniqueness of optimal extensions up to equivalence is not true in this example, since one can easily find another optimal extension $\hat\E_3$ with $\underline{\hat \E}_3=\mathcal T\C\P^n\oplus \O(2)$, and $\Phi(\hat\E_3)=\frac{n-1}{n}\in [0, 1)$. This shows that (II) is sharp. However, it is clear that 
$$\psi'_*\pi'^*(Gr(\underline{\hat\E}_2))\simeq \psi'_*\pi'^*(Gr(\underline{\hat\E}_3))\simeq \E$$
which is compatible with (IV) above. 

One of the reasons that we consider the associated graded sheaf in the above result is to connect with our previous results \cite{CS1, CS2}. It is reasonable to make the following conjecture. For related concepts  we refer the interested readers to \cite{CS2}. 

\begin{conj}\label{Conjecture-1.6}
Let $A$ be an admissible Hermitian-Yang-Mills connection on $(\E, B)$ and $\hat{\E}$ be any chosen optimal extension of $\E$ at $0$. Then there is a unique analytic tangent cone $(\E_\infty, A_\infty, \Sigma^{an}, \mu)$ on $\C^n$ of $(\E,A)$ at $0$, where $(\E_\infty, A_\infty)$ is a Hermitian-Yang-Mills cone, $\Sigma^{an}$ is the bubbling set, $\mu$ is the limiting measure, and moreover 
\begin{itemize}
\item $\E_\infty\simeq\psi'_*\pi'^*((Gr^{HNS}(\underline {\hat \E}))^{**})$;
\item $\Sigma^{an}=\pi'^{-1}(\Sing(Gr^{HNS}(\underline{\hat{\E}})))$;
\item for each irreducible component of $\Sigma^{an}$ of pure complex codimension $2$, the analytic multiplicity assigned by $\mu$ and the algebraic multiplicity are equal.
\end{itemize} 
\end{conj}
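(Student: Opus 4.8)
The plan is to establish the conjecture by combining the algebraic rigidity furnished by Theorem \ref{main} with the blow-up analysis of \cite{CS1, CS2}, the guiding principle being that the analytic tangent cone should be \emph{determined} by the algebraic optimal extension, so that the essential uniqueness of the latter---parts (II)--(IV) of Theorem \ref{main}, which show that $\psi'_*\pi'^*(Gr(\underline{\hat\E}))$ and $\psi'_*\pi'^*(Gr^{HNS}(\underline{\hat\E}))$ are independent of the chosen optimal extension---forces the well-definedness of the former. First I would fix the admissible Hermitian-Yang-Mills connection $A$ and form the rescalings $A_\lambda$ obtained by pulling back under the dilations $z\mapsto \lambda z$. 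Using the monotonicity formula for the Yang-Mills energy together with Uhlenbeck compactness, one extracts subsequential limits $(\E_\infty, A_\infty)$ that are Hermitian-Yang-Mills cones on $\C^n$, together with a bubbling set $\Sigma^{an}$ and a limiting measure $\mu$ recording the codimension-two curvature concentration. The content of the conjecture is then to pin down each of these four objects in terms of $Gr^{HNS}(\underline{\hat\E})$ and, crucially, to prove independence of the rescaling sequence.

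Second, to identify $\E_\infty$, I would track the holomorphic structure through the limit. A Hermitian-Yang-Mills cone is homogeneous, hence of the form $\psi'_*\pi'^*\underline{\E_\infty}$ for a polystable reflexive sheaf $\underline{\E_\infty}$ on $\C\P^{n-1}$, and the task becomes to show $\underline{\E_\infty}\simeq (Gr^{HNS}(\underline{\hat\E}))^{**}$. The mechanism is that the spectral decomposition of the limiting connection $A_\infty$ by the eigenvalues of its curvature should reproduce, on the algebraic side, the slope decomposition of the Harder-Narasimhan filtration of $\underline{\hat\E}$, while the finer polystable splitting of each graded piece matches the Harder-Narasimhan-Seshadri refinement; the reflexive hull $(\cdot)^{**}$ enters because any Hermitian-Yang-Mills connection on the smooth locus extends across its codimension-two singular set as a reflexive sheaf. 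Here the optimality condition $\Phi(\hat\E)\in[0,1)$ is what guarantees that the asymptotic holomorphic data on $D$ is read off from a \emph{single} rescaling regime rather than being smeared across several scales, which is precisely the role played by Theorem \ref{main}(IV) in the homogeneous model.

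Third, for the bubbling set and the multiplicities, I would analyze where the limiting Hermitian metric degenerates. The set $\Sigma^{an}$ is the support of $\mu$, and on the cone it is itself a cone, hence of the form $\pi'^{-1}(Z)$ for some $Z\subset \C\P^{n-1}$; identifying $Z$ with $\Sing(Gr^{HNS}(\underline{\hat\E}))$ amounts to showing that curvature concentrates exactly along the locus where the graded sheaf fails to be locally free. For the components of pure codimension two, the equality of analytic and algebraic multiplicities is a Chern-Weil identity: the analytic multiplicity is the mass $\mu$ assigns to a component, computed as a limit of normalized second Chern character integrals, whereas the algebraic multiplicity is the length of the torsion quotient of $Gr^{HNS}(\underline{\hat\E})$ along that component; both compute the same intersection number, so matching them reduces to a convergence statement for the second Chern forms under the blow-up limit.

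The hard part, and the reason this remains a conjecture, is the genuine uniqueness of the analytic tangent cone independent of the rescaling sequence, together with the precise matching of the analytic degeneration with the Harder-Narasimhan-Seshadri filtration rather than merely the Harder-Narasimhan filtration. Uniqueness would most naturally follow from a \L ojasiewicz-Simon inequality for the Yang-Mills functional adapted to the conical setting, but the bubbling along $\Sigma^{an}$ obstructs a direct application. The algebraic input from Theorem \ref{main} is meant to replace the analytic uniqueness statement by the algebraic one; bridging the two requires showing that every subsequential analytic limit is realized by \emph{some} optimal extension via an algebraic Hecke transform, and this compatibility---that the analytic blow-up limit always lands in the tree-like structure on $\mathcal A$ cut out by optimality---is the central obstacle to a complete proof.
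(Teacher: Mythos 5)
There is a fundamental mismatch to flag first: the statement you are trying to prove is Conjecture \ref{Conjecture-1.6}, and the paper contains \emph{no proof of it}. What the paper actually establishes is much weaker: (i) the purely algebro-geometric uniqueness statement, Theorem \ref{main}(III), which guarantees that the right-hand sides of the three bullet points are well-defined (independent of the chosen optimal extension up to twisting by $\O(k)$); and (ii) the validity of the conjecture in two special cases by quoting \cite{CS1, CS2} --- when $\E$ is homogeneous, $\E\simeq \psi_*\pi^*\underline\E$ for a locally free $\underline\E$, via Theorem \ref{main}(IV), and when some extension restricts to a stable vector bundle on $D$, via Theorem 1.3 of \cite{CS1}. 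For a general admissible Hermitian-Yang-Mills connection the paper only records, citing \cite{Tian}, that subsequential analytic tangent cones exist. So there is no ``paper's own proof'' to compare yours against, and no argument in this paper (or, as the authors state, in \cite{CS1, CS2}) closes the general case.

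Judged on its own terms, your proposal is a reasonable roadmap that matches the intended framework --- monotonicity plus Uhlenbeck compactness for existence of subsequential cones, identification of the limit sheaf with $(Gr^{HNS}(\underline{\hat\E}))^{**}$ in the spirit of \cite{CS1}, and a Chern--Weil argument for the codimension-two multiplicities as in \cite{CS2} --- and you correctly, and honestly, isolate the two genuine obstacles: uniqueness of the limit independent of the rescaling sequence (where {\L}ojasiewicz--Simon techniques are blocked by bubbling along $\Sigma^{an}$), and the need to show that every subsequential analytic limit is computed by \emph{some} optimal extension in $\mathcal A$. But these deferred steps are not technical details; they are the entire content of the conjecture. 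In particular, your second and third steps silently assume what must be proved: that the holomorphic structure and the curvature concentration locus of the rescaled connections converge to the algebraic data of an optimal extension. That convergence is known only in the homogeneous case with isolated singularity, where $\E$ is already a cone and the rescaling is trivial --- which is exactly why Theorem \ref{main}(IV) is formulated for homogeneous $\E$, so that combining it with \cite{CS1, CS2} yields the conjecture there. One smaller caution: your heuristic that optimality $\Phi(\hat\E)\in[0,1)$ forces the asymptotic data to live ``at a single rescaling regime'' is suggestive but unsubstantiated; nothing in the paper relates $\Phi$ to rates of curvature decay, and making that link precise would itself be a major step. In short: your outline is a faithful description of the open problem, not a proof of it, and you should present it as such.
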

\begin{rmk}
By Theorem \ref{main} (III), $Gr^{HNS}(\underline{\hat{\E}})$ is independent of the choice of an optimal extension up to tensoring with $\O(k)$. Namely, we already have uniqueness in the algebraic-geometric side. 
\end{rmk}

Combining Theorem \ref{main} and the main results in \cite{CS1, CS2}, we have proved the above conjecture in the case when $\E$ is homogeneous with an isolated singularity at $0$, i.e. when $\E\simeq \psi_*\pi^*\underline \E$ for some locally free sheaf $\underline \E$ on $\C\P^{n-1}$. When $\E$ admits an algebraic tangent cone $\hat\E$ which is a stable vector bundle(and then it must be the unique optimal extension up to equivalence by Theorem \ref{main}), we also know that $\E_\infty\simeq \psi'_*\pi'^*\underline{\hat{\E}}$, see Theorem $1.3$ in \cite{CS1}.  Conjecture \ref{Conjecture-1.6} improves and generalizes the conjectures in \cite{CS1, CS2}. Notice  in \cite{CS1, CS2} we restricted to the case of isolated singularities due to technical reasons;  for general admissible Hermitian-Yang-Mills connections, the existence of analytic tangent cones is still true by \cite{Tian}.

\

\textbf{Acknowledgments.} We are grateful to  Richard Thomas for pointing out that our original differential geometric construction in Section 2.2 is exactly given by what we now call the Hecke transform, which considerably simplifies  the language in our proof. We also thank Simon Donaldson and Thomas Walpuski for their interest in this work. 

Both authors are supported by  the Simons Collaboration Grant on Special Holonomy in Geometry, Analysis, and Physics (488633, S.S.).   S. S. is partially supported by an Alfred P. Sloan fellowship and NSF grant DMS-1708420. 

\section{Hecke transform of reflexives sheaves}

\subsection{The case of sub-bundles}\label{Section2.1}
Let $M$ be a complex manifold and $D$ be a smooth hypersurface in $M$. Let $E$ be a holomorphic vector bundle on $M$ and denote $\underline E:=E|_D$. Let $\underline F$ be a sub-bundle of $\underline E$. Let $\underline Q$ denote the quotient bundle $\underline E/\underline F$ and $\underline p: \underline E\rightarrow \underline Q$ the natural projection map. Then we have the following short exact sequence of vector bundles on $D$
\begin{equation} \label{eqn1-2}
0\rightarrow \underline F\rightarrow \underline E\xrightarrow{\underline p} \underline Q\rightarrow 0.
\end{equation}
We will describe below a construction, called the \emph{Hecke transform} along $\underline F$, that yields another vector bundle $E'$ on $M$, which is isomorphic to $E$ on $M\setminus D$, such that the restriction  $\underline E':=E'|_D$ fits into an extension of the form 
\begin{equation}\label{eqn1-3}
0\rightarrow \underline Q\otimes N_D\rightarrow \underline E'\rightarrow \underline F\rightarrow 0, 
\end{equation}
where $N_D$ is the normal bundle of $D$ in $M$. In the next section we shall re-interpret it in terms of more complex-analytic language, which makes the construction more natural and generalizes to the case of coherent sheaves. 

To start the construction, we choose an open cover $\{U_\alpha\}$ of a neighborhood $U$ of $D$, such that $E|_{U_\alpha}$ admits a trivialization given by holomorphic sections $e_{\alpha, 1}, \cdots, e_{\alpha, r}$, and such that if we denote $\e_\alpha^j:=e_\alpha|_{V_\alpha}$ where $V_\alpha:=U_\alpha\cap D$, then $\e_{\alpha, 1}, \cdots, \e_{\alpha, s}$ give a holomorphic trivialization of $F|_{V_\alpha}$, and $\underline p(e_{\alpha, s+1}), \cdots, \underline p(e_{\alpha, r})$ give a holomorphic trivialization of $\underline Q|_{V_\alpha}$.  We may also assume that the divisor line bundle $[D]$ has a local trivialization $t_\alpha$ on each $U_\alpha$. Choose a defining section $s$ of $[D]$ so that we can write $s=s_\alpha t_\alpha$ over each $U_\alpha$ with $s_\alpha$ vanishing on $D$ with exactly order one. 

On the intersection $U_{\alpha\beta}:=U_\alpha\cap U_\beta$, we can write the transition function of $E$ as 
$$
\Phi_{\alpha\beta}=\begin{bmatrix}
f_{\alpha\beta} & g_{\alpha\beta}   \\
h_{\alpha\beta}      &q_{\alpha\beta}.
\end{bmatrix}
$$
Denote $V_{\alpha\beta}:=U_{\alpha\beta}\cap D$. Then the fact that $\underline F$ is a sub-bundle of $\underline E$ implies that $h_{\alpha\beta}|_{V_{\alpha\beta}}=0$, and $g_{\alpha\beta}|_{V_{\alpha\beta}}$ defines the extension class in $\text{Ext}^1(\underline Q , \underline F)$ corresponding to the short exact sequence (\ref{eqn1-2}). 

Now define a new holomorphic basis of $E|_{U_\alpha\setminus D}$ by setting $e_{\alpha, j}'=e_{\alpha, j}$ for $j\leq s$ and $e_{\alpha, j}'=s_\alpha e_{\alpha, j}$ for $j\geq s+1$. Then with respect to the new basis, the new transition matrix becomes
$$
\Phi_{\alpha\beta}'=\begin{bmatrix}
f_{\alpha\beta} & g_{\alpha\beta}s_\alpha  \\
h_{\alpha\beta}s_\beta^{-1}      &q_{\alpha\beta} s_{\alpha}s_{\beta}^{-1}
\end{bmatrix}.
$$

Now the entries of this matrix extend to be well-defined holomorphic functions across $V_{\alpha\beta}$. Hence it defines a holomorphic vector bundle on $M$, which is our desired $E'$.  Moreover, since $s_{\alpha}s_{\beta}^{-1}$ is the transition function of the line bundle $[D]$, by adjunction formula, we see that by restricting to $D$, the right bottom component of $\Phi'_{\alpha\beta}$ gives the transition matrix for $\underline Q\otimes N_D^{-1}$. It is also clear that the whole matrix restricting to $D$ is now a lower triangular matrix, so it is obvious that the exact sequence (\ref{eqn1-3}) holds. 

One can check by definition that there is a well-defined vector bundle isomorphism from $E'$ to $E$ on $M\setminus D$, since by construction locally a holomorphic section of $E'$ is a holomorphic section of $E$ such that when restricting to $D$ it belongs to $\underline F$. One can also check that the isomorphism class of $E'$ does not depend on the choices made. It is also clear from the construction in the next subsection. 

\begin{rmk}
When $\dim M=1$, $D=\{x\}$, $\underline F$ is a subspace of $E|_x$. In this case the above construction is usually referred to as  the ``\emph{elementary modification}" or ``\emph{Hecke modification}" in the literature, and this justifies our choice of terminology.  
\end{rmk}

\subsection{General case}
Now we  move on to the general case of coherent sheaves, using a more complex-algebraic language (which is kindly pointed out to us by Richard Thomas). We again suppose $M$ is a smooth complex manifold and $D$ is a smooth hypersurface. Let $\iota: D\rightarrow M$ be the natural inclusion map, and $\E$ be a reflexive sheaf on $M$.  By  Lemma $3.24$ in \cite{CS1}, we know that $\underline \E:=\iota^*\E$ is a torsion-free coherent sheaf on $D$. 

 Let $\underline\F$ be a subsheaf of $\underline \E$ and $\underline\Q$ be the quotient sheaf. Denote $p: \mathcal E \rightarrow \iota_*(\underline\Q)$ to be the map given by the composition of  the natural surjective map $\E\rightarrow \iota_*\underline\E$ with the natural map $\iota_* \underline \E \rightarrow \iota_* \underline Q$.

\begin{lem}\label{lem1.1}
$p$ is a surjective sheaf homomorphism.
\end{lem}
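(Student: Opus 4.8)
The plan is to check surjectivity of $p$ on stalks, since surjectivity of a morphism of sheaves is a stalk-local condition. To this end I would factor $p$ as the composition
$$\E \xrightarrow{\ r\ } \iota_*\underline\E \xrightarrow{\ \iota_* q\ } \iota_*\underline\Q,$$
where $r$ is the natural restriction (adjunction) map and $q\colon \underline\E\to\underline\Q$ is the quotient map defining $\underline\Q$. It then suffices to show that each of the two factors is surjective, after which $p$ is surjective as a composition of surjections.

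The first factor is the map already observed to be surjective: for $x\notin D$ the target stalk $(\iota_*\underline\E)_x$ vanishes, while for $x\in D$, choosing a local defining function $f$ for $D$ near $x$, one identifies $(\iota_*\underline\E)_x=\underline\E_x=\E_x\otimes_{\O_{M,x}}\O_{D,x}=\E_x/f\E_x$, and $r_x$ is exactly the quotient map $\E_x\to\E_x/f\E_x$, which is surjective. The genuine content of the lemma is the surjectivity of the second factor $\iota_* q$. For this I would invoke the exactness of pushforward along the closed embedding $\iota\colon D\to M$: for any sheaf $\G$ on $D$ one has $(\iota_*\G)_x=\G_x$ for $x\in D$ and $(\iota_*\G)_x=0$ for $x\notin D$, so applying $\iota_*$ to the surjection $q$ preserves surjectivity on every stalk. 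Hence $\iota_* q$ is surjective, and therefore so is $p$.

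I do not expect a real obstacle here, as the argument is entirely formal. The only points that require care are the correct identification of the stalks of a pushforward along a closed embedding, distinguishing the cases $x\in D$ and $x\notin D$, and the exactness of $\iota_*$; both are standard. I would also remark that the torsion-freeness of $\underline\E=\iota^*\E$ guaranteed by Lemma~3.24 of \cite{CS1} is \emph{not} needed for this surjectivity statement — it enters only later, when the kernel of $p$ is used to produce the Hecke transform.
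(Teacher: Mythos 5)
Your proof is correct and follows essentially the same route as the paper: both reduce the statement to the surjectivity of $\iota_*\underline\E\rightarrow\iota_*\underline\Q$ and establish it by the exactness of $\iota_*$ for the closed embedding $\iota\colon D\hookrightarrow M$. The only difference is cosmetic — you verify exactness by computing stalks of the pushforward directly, while the paper invokes the vanishing of the higher direct images $\R^i\iota_*$ using that $\iota$ is a Stein morphism; these are two phrasings of the same fact.
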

\begin{proof}
It suffices to show the map $\iota_*\underline \E \rightarrow \iota_* \underline \Q$ is surjective. By definition we have the following exact sequence 
$$
0\rightarrow  \underline\F \rightarrow \underline\E \rightarrow \underline\Q \rightarrow 0. 
$$
Since $\iota: D \hookrightarrow M$ is obviously \emph{Stein}, namely, the pre-image of a Stein open set is Stein, the higher direct image $\R^i(\iota_* \underline \F)$ vanishes for $i\geq 1$. In particular, the following is exact
$$
0\rightarrow \iota_*\underline\F \rightarrow \iota_*\underline \E \rightarrow \iota_*(\underline\Q) \rightarrow 0.
$$
\end{proof}

\begin{defi}\label{Definition2.3}
We define the \emph{Hecke transform} $\E'$ of $\E$ along $\underline \F$ to be the kernel of the map $p$. 
\end{defi}

By definition, $\E'$ lies in the following short exact sequence
\begin{equation} \label{eqn1-5}
0\rightarrow \E'\rightarrow \E\rightarrow \iota_*\underline\Q\rightarrow 0. 
\end{equation}
In particular $\E'$ is a subsheaf of $\E$ which is  isomorphic to $\E$ over $M\setminus D$. In particular, it must be torsion-free. It is easy to check by definition that when $\E$ is locally free over $M$ and $\underline\Q$ is locally free over $D$, this agrees with the construction in the previous subsection. 

\begin{lem}
$\E'$ is reflexive if $\underline\F$ is saturated in $\underline \E$ or equivalently $\underline \Q$ is torsion-free. 
\end{lem}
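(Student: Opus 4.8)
The plan is to prove that $\E'$ is reflexive under the hypothesis that $\underline\F$ is saturated in $\underline\E$, equivalently that $\underline\Q=\underline\E/\underline\F$ is torsion-free. Since reflexivity of a torsion-free sheaf on a smooth variety can be checked via a normality-type/codimension condition, I would invoke the standard characterization: a torsion-free coherent sheaf $\G$ on a smooth manifold $M$ is reflexive if and only if it is normal, i.e.\ for every open set $V$ and every analytic subset $Z\subset V$ of codimension $\geq 2$, the restriction map $\G(V)\to \G(V\setminus Z)$ is an isomorphism; equivalently, $\G$ is reflexive iff it satisfies Serre's condition $S_2$ and hence $\mathrm{depth}_x\G\geq 2$ at all points $x$ where $\dim\mathcal O_{M,x}\geq 2$. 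The task is to transfer this property from $\E$ (which is reflexive by assumption) to the kernel $\E'$ defined by the short exact sequence \eqref{eqn1-5}.

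First I would analyze the sheaf $\iota_*\underline\Q$ appearing on the right of \eqref{eqn1-5}. The key input is the hypothesis: when $\underline\F$ is saturated, $\underline\Q$ is torsion-free on the smooth hypersurface $D$. A torsion-free sheaf on the smooth $(n-1)$-fold $D$ has depth $\geq 1$ at every point (away from where it vanishes), and consequently, by the behavior of depth under the closed embedding $\iota$, the pushforward $\iota_*\underline\Q$ has $\mathrm{depth}_x(\iota_*\underline\Q)\geq 1$ at points $x\in D$, with codimension-one support $D$ in $M$. This is exactly the condition one needs so that forming a kernel against $\E$ does not drop depth below $2$.

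The main step is a depth/local-cohomology argument on the exact sequence \eqref{eqn1-5}. Fix a point $x\in M$ with $\dim\mathcal O_{M,x}\geq 2$; I want $\mathrm{depth}_x\E'\geq 2$. From the short exact sequence $0\to\E'\to\E\to\iota_*\underline\Q\to 0$ and the long exact sequence of local cohomology (or the depth lemma applied to the standard inequality $\mathrm{depth}\,\E'\geq\min\{\mathrm{depth}\,\E,\ \mathrm{depth}(\iota_*\underline\Q)+1\}$), I would combine $\mathrm{depth}_x\E\geq 2$ (reflexivity of $\E$) with the fact that $\iota_*\underline\Q$ has depth $\geq 1$ in the relevant codimensions, torsion-freeness being precisely what guarantees this. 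The subtle point is getting the ``$+1$'' correctly from the codimension-one embedding $\iota$ and checking it exactly at the points where $\dim\mathcal O_{M,x}=2$, where there is the least room to spare; here the torsion-freeness of $\underline\Q$ (rather than mere coherence) is essential, since a torsion subsheaf of $\underline\Q$ supported in higher codimension would produce a depth-zero contribution that breaks the estimate. The equivalence ``$\underline\F$ saturated $\Leftrightarrow$ $\underline\Q$ torsion-free'' is standard and I would simply record it.

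The hardest part will be the depth bookkeeping at points of minimal dimension, i.e.\ verifying that pushing a torsion-free sheaf forward along the codimension-one inclusion $\iota$ and taking a kernel really does keep $\E'$ in the $S_2$ class rather than only $S_1$ (torsion-free). An alternative, perhaps cleaner route that I would keep in reserve is to argue directly with the normality criterion: take $Z\subset M$ analytic of codimension $\geq 2$, restrict the exact sequence to $M\setminus Z$, and compare sections using that $\E$ is normal while $\iota_*\underline\Q$ is normal along $D$ because $\underline\Q$ is torsion-free on $D$ (so $Z\cap D$ has codimension $\geq 2$ in $D$ as well, and sections of the torsion-free sheaf $\underline\Q$ extend). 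Diagram-chasing the resulting commutative ladder of restriction maps with the five lemma then forces $\E'(M\setminus Z)\cong\E'(M)$. Either way, the content lies entirely in promoting torsion-freeness of $\underline\Q$ to the $S_2$ extension property for $\E'$.
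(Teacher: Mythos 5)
Your approach is correct, but it takes a genuinely different route from the paper's. The paper argues directly on the double dual: from $0\to\E'\to\E\to\iota_*\underline\Q\to 0$ and the inclusion $(\E')^{**}\hookrightarrow\E^{**}=\E$ one gets an injection $(\E')^{**}/\E'\hookrightarrow\iota_*\underline\Q$; since this quotient is annihilated by $\I_D$ and supported in codimension one inside $D$ (because $\E'$ is locally free in codimension one), its pullback to $D$ is a torsion subsheaf of the torsion-free sheaf $\underline\Q$, hence zero, so $(\E')^{**}=\E'$. Your route instead verifies torsion-freeness of $\E'$ plus the normality/$S_2$ criterion, via the depth lemma $\mathrm{depth}\,\E'\geq\min\{\mathrm{depth}\,\E,\ \mathrm{depth}(\iota_*\underline\Q)+1\}$, combined with the facts that reflexivity of $\E$ gives depth $\geq 2$ and torsion-freeness of $\underline\Q$ on $D$ gives depth $\geq 1$ for $\iota_*\underline\Q$ (depth being unchanged under a closed immersion). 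Both proofs hinge on the same input --- torsion-freeness of $\underline\Q$ --- but the paper's argument is more elementary and self-contained, avoiding local cohomology, while yours is the more systematic homological version and makes transparent exactly where the hypothesis enters (a torsion piece of $\underline\Q$ would contribute depth $0$ and break the estimate at points of $D$). One small inaccuracy in your reserve argument: for $Z\subset M$ of codimension $\geq 2$, the intersection $Z\cap D$ need only have codimension $\geq 1$ in $D$ (e.g.\ $Z$ could be a divisor contained in $D$), so you cannot assert that sections of $\underline\Q$ extend across $Z\cap D$; but you do not need that --- the kernel version of the four lemma only requires \emph{injectivity} of $\underline\Q(V\cap D)\to\underline\Q((V\setminus Z)\cap D)$, which is exactly torsion-freeness, so the diagram chase still closes.
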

\begin{proof}
By Equation (\ref{eqn1-5}), we have the following exact sequence 
$$0\rightarrow (\E')^{**}/\E' \rightarrow \iota_* \underline{\Q}.$$
Since $\mathcal{I}_D \cdot \iota_* \underline \Q=0$, we have $\mathcal{I}_D\cdot ((\E')^{**}/\E')=0$. Then we have 
$$
(\E')^{**}/\E'=\iota_*\iota^*((\E')^{**}/\E')$$
and the following exact sequence
$$
0\rightarrow \iota^* ((\E')^{**}/\E' ) \rightarrow \iota^*\iota_*\underline \Q=\underline{\Q}.
$$
Since $\E'$ is torsion-free and locally free outside $D$, $\text{Supp}((\E')^{**}/\E')$ has codimension $1$ in $D$, which implies $\iota^* ((\E')^{**}/\E' )$ is a torsion sheaf.  Since $\underline\Q$ is torsion-free, by the exact sequence above, we have
$\iota^* ((\E')^{**}/\E' )=0$ which implies $(\E')^{**}/\E'=0$. This finishes the proof. 
\end{proof}

In our later applications we will always assume $\underline\F$ is saturated in $\underline \E$. The following proposition is a generalization of (\ref{eqn1-3}).

\begin{lem}\label{lem1.2}
There exists the following exact sequence
\begin{equation}\label{eqn1-1}
0\rightarrow \I_D\cdot\E\rightarrow \E' \rightarrow \iota_* \underline\F\rightarrow 0.
\end{equation}
\end{lem}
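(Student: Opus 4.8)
The plan is to realize the subsheaf $\I_D\cdot\E\subset\E$ as the kernel of the natural map $\E\to\iota_*\underline\E$, and then to run the snake lemma against the defining sequence (\ref{eqn1-5}) of $\E'$. The key structural observation is that the surjection $\E\to\iota_*\underline\E$ used to build $p$ is nothing but the quotient of $\E$ by $\I_D\cdot\E$.

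First I would record that, since $\underline\E=\iota^*\E=\E\otimes_{\O_M}\O_D=\E/(\I_D\cdot\E)$ (using that $\underline\E$ is a torsion-free sheaf on $D$ by Lemma $3.24$ in \cite{CS1}), the natural surjection $\E\to\iota_*\underline\E$ appearing in the definition of $p$ is exactly the quotient map of $\E$ by $\I_D\cdot\E$. Thus there is a short exact sequence
$$0\rightarrow\I_D\cdot\E\rightarrow\E\rightarrow\iota_*\underline\E\rightarrow0.$$
Because $p$ is by construction the composite of this surjection with $\iota_*\underline\E\rightarrow\iota_*\underline\Q$, the subsheaf $\I_D\cdot\E=\ker(\E\rightarrow\iota_*\underline\E)$ maps to $0$ under $p$, and hence is contained in $\E'=\ker p$. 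This legitimizes the first arrow in the sequence (\ref{eqn1-1}) we wish to construct.

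Next I would assemble the commutative diagram whose bottom row is the defining sequence $0\rightarrow\E'\rightarrow\E\xrightarrow{p}\iota_*\underline\Q\rightarrow0$ of (\ref{eqn1-5}) and whose top row is the trivial short exact sequence $0\rightarrow\I_D\cdot\E\rightarrow\I_D\cdot\E\rightarrow0\rightarrow0$, with left vertical arrow the inclusion $\I_D\cdot\E\hookrightarrow\E'$ (legitimate by the previous step), middle vertical arrow the inclusion $\I_D\cdot\E\hookrightarrow\E$, and right vertical arrow the zero map $0\rightarrow\iota_*\underline\Q$. All three vertical maps are injective and the bottom map $p$ is surjective (Lemma \ref{lem1.1}), so the snake lemma yields an exact sequence of cokernels
$$0\rightarrow\E'/(\I_D\cdot\E)\rightarrow\E/(\I_D\cdot\E)\rightarrow\iota_*\underline\Q\rightarrow0,$$
that is, $0\rightarrow\E'/(\I_D\cdot\E)\rightarrow\iota_*\underline\E\rightarrow\iota_*\underline\Q\rightarrow0$. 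Comparing this with the exact sequence $0\rightarrow\iota_*\underline\F\rightarrow\iota_*\underline\E\rightarrow\iota_*\underline\Q\rightarrow0$ established in the proof of Lemma \ref{lem1.1} (via the Stein property of $\iota$), I identify $\E'/(\I_D\cdot\E)$ with $\ker(\iota_*\underline\E\rightarrow\iota_*\underline\Q)=\iota_*\underline\F$, which is precisely the claimed sequence (\ref{eqn1-1}).

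The computation is essentially formal once the first step is in place, so the only point genuinely requiring care is the identification $\I_D\cdot\E=\ker(\E\rightarrow\iota_*\underline\E)$ together with the verification that this kernel lands inside $\E'$; both reduce to the fact that $p$ factors through $\iota_*\underline\E$. One could equally well bypass the snake lemma and argue stalkwise: restrict the surjection $\E\rightarrow\iota_*\underline\E$ to $\E'$, note its image lies in $\ker(\iota_*\underline\E\rightarrow\iota_*\underline\Q)=\iota_*\underline\F$, check that its kernel is $\E'\cap(\I_D\cdot\E)=\I_D\cdot\E$, and deduce surjectivity onto $\iota_*\underline\F$ from the surjectivity of $\E\rightarrow\iota_*\underline\E$. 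I expect the snake-lemma route to be the cleanest to write down.
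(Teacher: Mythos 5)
Your proof is correct and is essentially the paper's argument: the paper simply observes that $\E'$ is the preimage of $\iota_*\underline\F$ under $\E\rightarrow\iota_*\underline\E$, so it surjects onto $\iota_*\underline\F$ with kernel $\ker(\E\rightarrow\iota_*\underline\E)=\I_D\cdot\E$, which is exactly the stalkwise alternative you sketch at the end. The snake-lemma packaging is a harmless reformulation of the same two observations (and the appeal to torsion-freeness of $\underline\E$ is not actually needed to identify $\I_D\cdot\E$ as that kernel).
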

\begin{proof}
By definition $\E'$ is exactly the pre-image of $\iota_*\underline\F$ under the natural map $\E\rightarrow \iota_*\underline \E$.  So we have a natural surjective map $\E'\rightarrow \iota_*\underline \F$. The kernel of this map agrees with the kernel of the map $\E\rightarrow \iota_*\underline \E$, which is exactly $\I_D \cdot \E$. This finishes the proof. 
\end{proof}

Denote $\underline \E'=\iota^*\E'$.
\begin{prop}\label{Hecktransform}
There exists the following exact sequence 
$$
0\rightarrow \underline\Q\otimes \mathcal N_D^{*}\rightarrow \underline \E' \rightarrow \underline \F \rightarrow 0, 
$$
where $\mathcal N_D^*\simeq \I_D/\I_D^2$ is the locally free sheaf associated to the co-normal bundle of $D$. 
\end{prop}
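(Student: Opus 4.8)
The plan is to obtain the sequence by applying the derived restriction functor $L\iota^*$ to the defining short exact sequence (\ref{eqn1-5}) and reading off $\underline\E'=\iota^*\E'$ from the associated long exact sequence of $\mathrm{Tor}$-sheaves. Since $D$ is a smooth hypersurface it is locally cut out by a single holomorphic function $s$ which is a non-zerodivisor, so $L_i\iota^*$ vanishes for $i\geq 2$ and $L_1\iota^*\G=\mathrm{Tor}_1^{\O_M}(\G,\O_D)$ for every coherent sheaf $\G$. Two inputs feed the argument. First, for any torsion-free sheaf $\G$ on $M$ one has $L_1\iota^*\G=0$: locally $L_1\iota^*\G\cong\ker(\G\xrightarrow{\,\cdot s\,}\G)$, and multiplication by the nonzero element $s$ is injective on a torsion-free module over the integral domain $\O_{M,x}$. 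This applies both to $\E$ and to $\E'$, the latter being torsion-free as a subsheaf of $\E$ by (\ref{eqn1-5}). Second, for the sheaf $\iota_*\underline\Q$, which is supported on $D$, I would use the self-intersection identity $L_1\iota^*(\iota_*\underline\Q)\cong\underline\Q\otimes\mathcal N_D^*$; this follows by tensoring the Koszul resolution $0\to\O_M(-D)\xrightarrow{\,\cdot s\,}\O_M\to\iota_*\O_D\to0$ with $\iota_*\underline\Q$ and observing that multiplication by $s$ acts as zero on a sheaf annihilated by $\I_D$, so that $\mathrm{Tor}_1\cong\iota_*\underline\Q\otimes\O_M(-D)\cong\iota_*(\underline\Q\otimes\mathcal N_D^*)$ by the projection formula, using $\iota^*\O_M(-D)\cong\mathcal N_D^*$.

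With these in hand, applying $L\iota^*$ to (\ref{eqn1-5}) and using $L_1\iota^*\E'=L_1\iota^*\E=0$ yields the four-term exact sequence
$$0\to\underline\Q\otimes\mathcal N_D^*\to\underline\E'\xrightarrow{\ \iota^*\lambda\ }\underline\E\xrightarrow{\ \iota^*p\ }\underline\Q\to0,$$
where $\lambda\colon\E'\hookrightarrow\E$ and $p\colon\E\to\iota_*\underline\Q$ are the maps in (\ref{eqn1-5}). The last step is to identify $\iota^*p$ with the natural quotient $\underline\E\to\underline\Q$: by construction $p$ is the composite $\E\to\iota_*\underline\E\to\iota_*\underline\Q$, and applying $\iota^*$ together with the triangle identity for the unit $\E\to\iota_*\iota^*\E$ shows $\iota^*p$ is exactly the quotient map, whose kernel is $\underline\F$. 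Hence $\mathrm{im}(\iota^*\lambda)=\underline\F$, and the induced surjection $\underline\E'\twoheadrightarrow\underline\F$ has kernel $\ker(\iota^*\lambda)=\mathrm{im}(\underline\Q\otimes\mathcal N_D^*\to\underline\E')=\underline\Q\otimes\mathcal N_D^*$, which gives the desired short exact sequence.

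I expect the main obstacle to be the bookkeeping in the two homological computations—precisely, verifying the vanishing $L_1\iota^*\G=0$ for torsion-free $\G$ and correctly obtaining the conormal (rather than normal) bundle $\mathcal N_D^*$ in the self-intersection identity—together with pinning down that the maps appearing in the long exact sequence really are the natural ones. An alternative route would start instead from Lemma \ref{lem1.2}, i.e. (\ref{eqn1-1}), applying $L\iota^*$ to $0\to\I_D\cdot\E\to\E'\to\iota_*\underline\F\to0$; this produces $0\to\underline\F\otimes\mathcal N_D^*\xrightarrow{\ \delta\ }\underline\E\otimes\mathcal N_D^*\to\underline\E'\to\underline\F\to0$, and one then has to recognize the connecting map $\delta$ as the inclusion $\underline\F\otimes\mathcal N_D^*\hookrightarrow\underline\E\otimes\mathcal N_D^*$ obtained by tensoring $0\to\underline\F\to\underline\E\to\underline\Q\to0$ with $\mathcal N_D^*$, so that its cokernel is $\underline\Q\otimes\mathcal N_D^*$. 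I would prefer the first route, since there the cokernel identification is immediate from the natural quotient $\underline\E\to\underline\Q$, whereas the naturality of $\delta$ is the more delicate point.
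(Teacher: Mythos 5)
Your proof is correct, but it follows a genuinely different route from the paper's. The paper does not use derived functors at all: it starts from Lemma \ref{lem1.2}, applies the (underived) right-exact functor $\iota^*$ to the sequence (\ref{eqn1-1}), $0\to\I_D\cdot\E\to\E'\to\iota_*\underline\F\to 0$, to get $\iota^*(\I_D\cdot\E)\xrightarrow{\psi}\underline\E'\to\underline\F\to 0$, and then computes $\ker(\psi)$ explicitly as $\I_D\cdot\E'/\I_D^2\cdot\E$; tensoring (\ref{eqn1-1}) with the invertible sheaf $\I_D$ identifies this with $\iota_*(\underline\F\otimes\mathcal N_D^*)$, and after checking that its inclusion into $\iota^*(\I_D\cdot\E)=\underline\E\otimes\mathcal N_D^*$ is the natural one, the image of $\psi$ is read off as $\underline\Q\otimes\mathcal N_D^*$. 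In other words, the paper carries out by hand exactly the delicate connecting-map identification that you flag as the weak point of your ``alternative route'' via (\ref{eqn1-1}); your preferred route, applying $L\iota^*$ to (\ref{eqn1-5}) and using $L_1\iota^*=0$ on torsion-free sheaves together with the self-intersection formula $L_1\iota^*(\iota_*\underline\Q)\cong\underline\Q\otimes\mathcal N_D^*$, sidesteps that issue because the cokernel identification reduces to the formal fact that $\iota^*p$ is the quotient $\underline\E\to\underline\Q$ (and note that only $L_1\iota^*\E=0$, not $L_1\iota^*\E'=0$, is actually needed for the four-term sequence). What each buys: the paper's argument is elementary and self-contained at the cost of some explicit module-level bookkeeping; yours is shorter and more conceptual once the two standard Tor computations are granted, and both are valid.
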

\begin{proof}
Applying $\iota^*$ to (\ref{eqn1-1}) we get the exact sequence 
\begin{equation}\label{eqn1-4}
 \iota^*(\I_D\cdot \E) \xrightarrow{\psi}\underline \E' \rightarrow\iota^* \iota_*\underline\F=\underline \F\rightarrow 0.
\end{equation}
It suffices to prove $\text{Ker}(\psi)=\underline Q\otimes \mathcal N_D^*$.  By definition, $\psi$ comes from the map $\I_D \cdot \E\rightarrow \E'$ by tensoring with $\O_D$, so the kernel is given by  $\I_D\cdot \E' /\I_D^2 \cdot \E $. Since $\I_D$ is locally free, we have the following exact sequence
$$
0\rightarrow \I^2_D \cdot \E  \rightarrow \I_D\cdot \E' \rightarrow \I_D \otimes \iota_*\underline\F\rightarrow 0.
$$
This implies that as $\O_M$-modules, we have 
$$\I_D\cdot \E' /\I_D^2 \cdot \E=\I_D \otimes \iota_*\underline\F= \iota_*(\underline \F \otimes \mathcal N_D^*) $$
It is direct to check that the inclusion of $\text{Ker}(\psi)$ in $\iota^*(\I_D\cdot \E)$ is given by the natural map 
$$\iota_*\underline\F\otimes \mathcal{N}_D^*\rightarrow \underline \E\otimes \mathcal{N}_D^*$$
under the natural identification $\iota^*(\mathcal{I}_D\cdot \E)=\underline\E\otimes \mathcal{N}_D^* $. Hence we see the image of $\psi$ is given by 
$$ (\underline \E\otimes \mathcal N_D^*)/ (\underline\F\otimes \mathcal N_D^*)=\underline\Q\otimes \mathcal N_D^*.$$
\end{proof}

Now we will discuss some interesting properties of the Hecke transform. Let $\E''$ be the Hecke transform of $\E'$ along $\underline \Q\otimes \mathcal N_D^*$. 

\begin{lem} \label{lem2.7}
The Hecke transform is an involution up to twisting by $[D]$ in the sense that $\E''\cong \E (-[D])$.
\end{lem}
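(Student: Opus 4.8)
The plan is to compute $\E''$ directly from the two structural sequences already established and to recognize the result as $\I_D\cdot\E$. First I would record what data the second Hecke transform actually uses. By Proposition \ref{Hecktransform}, the restriction $\underline\E'=\iota^*\E'$ sits in the short exact sequence
$$0\rightarrow \underline\Q\otimes\mathcal N_D^*\rightarrow \underline\E'\xrightarrow{\rho} \underline\F\rightarrow 0,$$
and since $\underline\F$ is torsion-free, the subsheaf $\underline\Q\otimes\mathcal N_D^*$ is saturated, so the Hecke transform $\E''$ of $\E'$ along it is well-defined. By Definition \ref{Definition2.3}, $\E''$ is the kernel of the natural map $p':\E'\rightarrow \iota_*\underline\F$ obtained by composing the adjunction map $\E'\rightarrow\iota_*\underline\E'$ with $\iota_*$ of the quotient $\rho$.

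The heart of the argument is to identify $p'$ with the surjection furnished by Lemma \ref{lem1.2}. Recall that $\E'$ is, by construction, the preimage of $\iota_*\underline\F$ under the adjunction map $\E\rightarrow\iota_*\underline\E$, so the inclusion $j:\E'\hookrightarrow\E$ composed with $\E\rightarrow\iota_*\underline\E$ produces a natural surjection $q:\E'\rightarrow\iota_*\underline\F$ whose kernel is exactly $\I_D\cdot\E$ by Lemma \ref{lem1.2}. I would prove $p'=q$ using naturality of the adjunction unit for the closed immersion $\iota$: applied to $j$ it gives the relation $u_\E\circ j=\iota_*(\iota^*j)\circ u_{\E'}$, so it suffices to check that the restriction $\iota^*j:\underline\E'\rightarrow\underline\E$ of the inclusion factors as $\underline\E'\xrightarrow{\rho}\underline\F\hookrightarrow\underline\E$. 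This is precisely the content of the local computation in Section \ref{Section2.1}: in the modified trivialization the basis vectors $e'_{\alpha,j}=s_\alpha e_{\alpha,j}$ with $j\geq s+1$ restrict to zero on $D$, so $\iota^*j$ annihilates $\underline\Q\otimes\mathcal N_D^*=\ker\rho$ and induces an isomorphism of the quotient onto $\underline\F$; the general coherent case follows from the identification of $\ker\psi$ with $\underline\Q\otimes\mathcal N_D^*$ carried out in the proof of Proposition \ref{Hecktransform}.

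Granting $p'=q$, I obtain $\E''=\ker(p')=\ker(q)=\I_D\cdot\E$. To conclude I would identify $\I_D\cdot\E$ with $\E(-[D])$: since $\I_D=[D]^{\otimes(-1)}$ is a line bundle whose local generator is a nonzerodivisor and $\E$ is torsion-free, the multiplication map $\I_D\otimes\E\rightarrow\E$ is injective with image $\I_D\cdot\E$, whence $\E''\cong\I_D\otimes\E\cong\E(-[D])$.

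The main obstacle is the compatibility step $p'=q$: one must be careful that the quotient map $\rho$ appearing in Proposition \ref{Hecktransform} is the \emph{same} surjection onto $\underline\F$ that is induced by restricting the inclusion $\E'\hookrightarrow\E$ to $D$, rather than some a priori different map. The adjunction-naturality reduction above isolates this point to a statement purely on $D$, which is then settled by the explicit description already in hand.
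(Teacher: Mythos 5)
Your proposal is correct and follows essentially the same route as the paper: identify $\E''$ as the kernel of the composite $\E'\rightarrow\iota_*\underline\E'\rightarrow\iota_*\underline\F$, match this map with the surjection of Lemma \ref{lem1.2} whose kernel is $\I_D\cdot\E$, and conclude $\E''\cong\I_D\otimes\E\cong\E(-[D])$. The only difference is that you spell out the compatibility $p'=q$ via naturality of the adjunction unit, a point the paper asserts without elaboration.
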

\begin{proof}
By definition and Proposition \ref{Hecktransform}, $\E''$ fits into the following exact sequence
$$
0\rightarrow \E''\rightarrow \E'\rightarrow \iota_*(\underline \E'/(\Q \otimes \mathcal N_D^*))=\iota_*\underline\F\rightarrow 0, 
$$
and the map $\E'\rightarrow\iota_*\underline \F$ agrees with the map in (\ref{eqn1-1}).
By Lemma \ref{lem1.2}, $\E''$ is isomorphic to $\I_D\cdot\E$. 
\end{proof}

More generally, we can take a subsheaf of $\underline\Q\otimes \mathcal N_D^{*}$ which has the form $(\underline\E_1/\underline \F) \otimes \mathcal N_D^*,$
where $\underline \E_1\subset \iota^*\E$ is a saturated subsheaf with $\underline \F \subset \underline \E_1$. Let  $\E''_1$ be the Hecke transform of $\E'$ along $(\underline\E_1/\underline \F) \otimes \mathcal N_D^*$ and $\E'_1$ be the  Hecke transform of $\E$ along $\underline \E_1$. Then the following involution property holds.  

\begin{prop}
$\E''_1\simeq \I_D \cdot\E_1'$.
\end{prop}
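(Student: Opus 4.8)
The plan is to realize both $\E_1''$ and $\I_D\cdot\E_1'$ as subsheaves of $\E'$ that contain $\I_D\cdot\E'$, and then to show they coincide by identifying their common image under the restriction map $\E'\to\iota^*\E'$. This directly generalizes the proof of Lemma \ref{lem2.7}, which is the special case $\underline\E_1=\underline\E$ (so that $\E_1'=\E$ and $\E_1''=\E''$), and reduces the whole statement to an equality of subsheaves.

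First I would record the relevant chain of inclusions. Since $\underline\F\subset\underline\E_1\subset\underline\E$, taking preimages under the natural map $\E\to\iota_*\underline\E$ gives $\I_D\cdot\E\subset\E'\subset\E_1'\subset\E$. In particular $\E_1'\subset\E$ forces $\I_D\cdot\E_1'\subset\I_D\cdot\E$, and since $\I_D\cdot\E\subset\E'$ by (\ref{eqn1-1}) we get $\I_D\cdot\E_1'\subset\E'$; on the other hand $\E'\subset\E_1'$ gives $\I_D\cdot\E'\subset\I_D\cdot\E_1'$. Thus both $\E_1''$ and $\I_D\cdot\E_1'$ are sandwiched as $\I_D\cdot\E'\subset(\cdot)\subset\E'$. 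Writing $q:\E'\to\E'/\I_D\cdot\E'=\iota_*\underline\E'$ for the restriction map, whose kernel $\I_D\cdot\E'$ is contained in both, it suffices to prove $q(\E_1'')=q(\I_D\cdot\E_1')$ as subsheaves of $\iota_*\underline\E'$.

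By Definition \ref{Definition2.3} the image $q(\E_1'')$ is, by construction, the chosen subsheaf $(\underline\E_1/\underline\F)\otimes\mathcal N_D^*$ of $\underline\E'$, embedded via the exact sequence of Proposition \ref{Hecktransform}. So the entire content is to compute $q(\I_D\cdot\E_1')$, and here I would use exactly the map $\psi$ from the proof of Proposition \ref{Hecktransform}. Applying Lemma \ref{lem1.2} to $\E_1'$ gives $0\to\I_D\cdot\E\to\E_1'\to\iota_*\underline\E_1\to 0$; tensoring with the line bundle $\I_D$ identifies $\I_D\cdot\E_1'/\I_D^2\cdot\E$ with the subsheaf $\underline\E_1\otimes\mathcal N_D^*$ of $\iota^*(\I_D\cdot\E)=\underline\E\otimes\mathcal N_D^*$. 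Since $q$ restricted to $\I_D\cdot\E$ kills $\I_D^2\cdot\E\subset\I_D\cdot\E'$ and therefore factors through $\psi:\underline\E\otimes\mathcal N_D^*\to\underline\E'$, and since $\ker\psi=\underline\F\otimes\mathcal N_D^*$ (read off from the proof of Proposition \ref{Hecktransform}, where $\psi$ has image $\underline\Q\otimes\mathcal N_D^*=(\underline\E/\underline\F)\otimes\mathcal N_D^*$), I obtain
$$q(\I_D\cdot\E_1')=\psi(\underline\E_1\otimes\mathcal N_D^*)=(\underline\E_1\otimes\mathcal N_D^*)/(\underline\F\otimes\mathcal N_D^*)=(\underline\E_1/\underline\F)\otimes\mathcal N_D^*,$$
which is precisely $q(\E_1'')$. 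As both sheaves are the $q$-preimage of this common subsheaf and both contain $\ker q=\I_D\cdot\E'$, they are equal, giving $\E_1''\simeq\I_D\cdot\E_1'$.

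The routine-but-delicate point, and the one I would write out most carefully, is the final compatibility: that the copy of $(\underline\E_1/\underline\F)\otimes\mathcal N_D^*$ arising as $\psi(\underline\E_1\otimes\mathcal N_D^*)$ sits inside $\underline\E'$ as the \emph{same} subsheaf used to define $\E_1''$, namely the restriction to $(\underline\E_1/\underline\F)\otimes\mathcal N_D^*$ of the embedding $\underline\Q\otimes\mathcal N_D^*\hookrightarrow\underline\E'$ of Proposition \ref{Hecktransform}, and not merely an abstractly isomorphic one. This is a diagram chase through the natural identifications $\iota^*(\I_D\cdot\E)=\underline\E\otimes\mathcal N_D^*$ and $\I_D\otimes\iota_*(-)=\iota_*((-)\otimes\mathcal N_D^*)$ already invoked in Proposition \ref{Hecktransform}; because $\psi$ was defined as $\iota^*$ of the inclusion $\I_D\cdot\E\hookrightarrow\E'$ and the inclusion $\I_D\cdot\E_1'\subset\I_D\cdot\E$ is compatible with $\underline\E_1\hookrightarrow\underline\E$, the two embeddings match, after which everything is formal.
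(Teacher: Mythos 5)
Your proposal is correct. It is close in spirit to the paper's argument but runs in the ``opposite direction'': the paper realizes both $\E_1''$ and $\I_D\cdot\E_1'$ as subsheaves of $\I_D\cdot\E$ and identifies them by computing their common cokernel $\iota_*((\underline\E/\underline\E_1)\otimes\mathcal N_D^*)$ --- first via a commutative diagram comparing the defining sequence of $\E_1''$ with the sequence of Lemma \ref{lem2.7} (which gives $\E''=\I_D\cdot\E$), and then by tensoring the defining sequence of $\E_1'$ with the line bundle $\I_D$ --- whereas you realize both as subsheaves of $\E'$ sandwiched above $\I_D\cdot\E'$ and identify them by computing their common image $\iota_*((\underline\E_1/\underline\F)\otimes\mathcal N_D^*)$ under $q:\E'\to\iota_*\underline{\E}'$, using the map $\psi$ from the proof of Proposition \ref{Hecktransform}. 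The two routes are of comparable length and hinge on the same kind of naturality check: the paper must implicitly know that its two surjections $\I_D\cdot\E\to\iota_*((\underline\E/\underline\E_1)\otimes\mathcal N_D^*)$ coincide (it does not spell this out), while you must know that $\psi(\underline\E_1\otimes\mathcal N_D^*)$ is the distinguished copy of $(\underline\E_1/\underline\F)\otimes\mathcal N_D^*$ inside $\underline\E'$ used to define $\E_1''$ --- a point you correctly flag and sketch. A small advantage of your version is that it yields an actual equality of subsheaves of $\E'$ rather than an abstract isomorphism, and that the only input beyond Proposition \ref{Hecktransform} is the elementary chain $\I_D\cdot\E\subset\E'\subset\E_1'\subset\E$, so Lemma \ref{lem2.7} is not needed as a separate ingredient.
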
 

\begin{proof}
We have the following commutative diagram
\[\begin{tikzcd}
0\arrow{r}& \E_1''\arrow{r}\arrow{d}&\E'\arrow{r}\arrow{d}{=}&\iota_*(\underline\E'/((\underline\E_1/\underline \F)\otimes \mathcal N_D^*))\arrow{r} \arrow{d}&0\\
0\arrow{r}& \E''= \I_D\cdot \E \arrow{r}
&\E' \arrow{r}
&\iota_*(\underline \E' /(\underline\Q\otimes \mathcal N_D^{*}) \arrow{r} &0
\end{tikzcd}
\]
where the first row  is by definition and the second row is by  Lemma \ref{lem2.7}. This implies the following exact sequence 
$$
0\rightarrow (\I_D\cdot \E )/\E_1'' \rightarrow \E'/\E_{1}'' =\iota_*(\underline\E'/((\underline\E_1/\underline \F)\otimes \mathcal N_D^*)) \rightarrow  \iota_*(\underline \E' /(\underline\Q\otimes \mathcal N_D^{*})\rightarrow 0.
$$
As a result, we have 
$$(\I_D\cdot \E )/\E_1''=\iota_*(\underline\Q\otimes \mathcal N_D^*)/\iota_*(\underline \E_1/\underline \F \otimes \mathcal N_D^*)=\iota_*((\underline \E/\underline\E_1)\otimes \mathcal N^*_D)
$$
which implies the following exact sequence 
$$
0\rightarrow \E_1''\rightarrow \I_D\cdot \E  \rightarrow \iota_*(\underline \E/ \underline \E_1 \otimes \mathcal N^*_D)\rightarrow 0.
$$
By definition, we also have 
$$
0\rightarrow \E_1 \rightarrow \E \rightarrow \iota_*(\underline \E/\underline \E_1)\rightarrow 0.
$$
Since $\I_D$ is locally free, we have 
$$
0\rightarrow\I_D\cdot \E_1  \rightarrow \I_D \cdot \E  \rightarrow \iota_*(\underline \E/\underline \E_1)\otimes \I_D=\iota_*(\underline \E /\underline \E_1 \otimes \mathcal N_D^*) \rightarrow 0.
$$
This finishes the proof. 
\end{proof}

\section{Proof of the Main Theorem}
\subsection{Proof of (I)}
 We begin with a simple observation.
\begin{lem}\label{lem1.8}
The image of the map $\Phi: \mathcal A\rightarrow \mathbb Q_{\geq 0}$ is discrete. In particular, a minimizer of $\Phi$ always exists. 
\end{lem}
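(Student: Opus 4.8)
The plan is to show that every value of $\Phi$ lies in a single fixed discrete subset of $\mathbb Q_{\geq 0}$, namely $\frac1L\Z_{\geq 0}$ for an integer $L$ depending only on the rank of $\E$. Once this is established, discreteness of the image and the existence of a minimizer are immediate, regardless of the fact that $\mathcal A$ may carry continuous moduli.

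First I would fix $r:=\rk(\E)$ and observe that this rank is shared by the restriction to $D$ of every extension. Indeed, if $\hat\E\in\mathcal A$ then $\hat\E$ is reflexive on $\hat B$ and agrees with $p^*\E$ off $D$, so $\rk\hat\E=r$; since by hypothesis $\underline{\hat\E}=\iota^*\hat\E$ is torsion-free on $D$, its generic rank is again $r$. Thus, uniformly over all of $\mathcal A$, the sheaf $\underline{\hat\E}$ is a torsion-free sheaf on $\C\P^{n-1}$ of rank $r$.

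Next I would analyze the slopes appearing in the Harder--Narasimhan filtration. For any torsion-free coherent sheaf $\underline\F$ on $\C\P^{n-1}$ the degree $\deg\underline\F=\int c_1(\underline\F)\wedge c_1(\O(1))^{n-2}$ is an integer, because $c_1(\underline\F)\in H^2(\C\P^{n-1},\Z)$ and $\O(1)$ generates $H^2$. Each HN quotient $\underline\E_k/\underline\E_{k-1}$ is torsion-free of some rank $r_k$ with $1\leq r_k\leq r$ and integer degree $d_k$, so its slope is $\mu_k=d_k/r_k\in\frac{1}{r_k}\Z\subset\frac1L\Z$, where $L:=\mathrm{lcm}(1,\dots,r)$. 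Consequently $\Phi(\hat\E)=\mu_1-\mu_m\in\frac1L\Z$, and since $\mu_1\geq\mu_m$ gives $\Phi(\hat\E)\geq 0$, the entire image of $\Phi$ is contained in the set $\frac1L\Z_{\geq 0}$. This set is discrete and well-ordered, so the image is discrete and, being a nonempty subset of a well-ordered set, has a least element; hence a minimizer of $\Phi$ exists.

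The argument is elementary, and the only point requiring a moment of care is the \emph{uniformity} of the denominator bound, i.e.\ that one integer $L$ works for every element of $\mathcal A$ simultaneously. This is exactly what the rank bound $r_k\leq r=\rk\E$ supplies, and it is the reason discreteness survives the passage to the whole family even when $\mathcal A$ is positive-dimensional. I do not expect any serious obstacle here beyond recording these bounds precisely.
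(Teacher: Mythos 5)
Your proof is correct and takes essentially the same route as the paper's: both observe that every Harder--Narasimhan slope is an integer degree divided by a rank at most $\rk(\E)$, so all values of $\Phi$ lie in a fixed discrete subset of $\mathbb Q_{\geq 0}$ (the paper uses $(\rk(\E)!)^{-1}\mathbb Z_{\geq 0}$ where you use $\mathrm{lcm}(1,\dots,\rk\E)^{-1}\mathbb Z_{\geq 0}$, a cosmetic difference).
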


\begin{proof}
By definition, 
$$\mu_i = \frac{\int_{D}c_1(\E_i/\E_{i-1})\cup c_1(\O(1))^{n-2}}{\rk(\E_i/\E_{i-1})}\in (\rk(\E)!)^{-1}\mathbb{Z}.$$
This  implies for any extension $\hat \E$,
$
\Phi(\hat{\E}) \in (\rk(\E)!)^{-1}\mathbb{Z}_{\geq 0}.
$
\end{proof}

Now let $\hat\E\in \mathcal A$. Let $0\subset \underline\E_1\subset \cdots \underline\E_m = \underline{\hat \E}$ be the Harder-Narasimhan filtration of $\underline{\hat\E}$. In the following, for each $k<m$ we always denote by $\hat{\E}^k$ to be the Hecke transform of $\hat \E$ along $\underline \E_k$ and denote $\underline{\hat{\E}^k}=\iota^*\hat{\E}^k$.  Given any sheaf $\underline \F$ over $\C\P^{n-1}$, we also denote  
$$\underline\F(j):=\underline\F\otimes \mathcal O(j).$$

\begin{lem}\label{ErrorComparison}
$\Phi(\hat{\E}^k)\leq \max\{\mu_{k+1}-\mu_m, \Phi(\hat{\E})-1, \mu_{k+1}-\mu_k +1, \mu_1-\mu_k\}
$
for any $k$. 
\end{lem}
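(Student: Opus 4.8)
The plan is to compute the algebraic tangent cone $\underline{\hat\E^k}$ explicitly via Proposition \ref{Hecktransform} and then to control only its maximal and minimal slopes, since $\Phi$ depends on nothing else. First I would record the geometric input special to our situation: because $D$ is the exceptional divisor of the blow-up of a point, its normal bundle is $N_D=\O_D(-1)$, so the co-normal sheaf is $\mathcal N_D^*\simeq \O_D(1)$. Writing $\underline\Q:=\underline{\hat\E}/\underline\E_k$, which is torsion-free because $\underline\E_k$ is saturated, Proposition \ref{Hecktransform} applied to the saturated subsheaf $\underline\E_k\subset\underline{\hat\E}$ yields on $\C\P^{n-1}$ the short exact sequence
$$0\rightarrow \underline\Q(1)\rightarrow \underline{\hat\E^k}\rightarrow \underline\E_k\rightarrow 0.$$

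Next I would read off the four relevant extremal slopes from the Harder--Narasimhan filtration. Since $0\subset\underline\E_1\subset\cdots\subset\underline\E_k$ is the Harder--Narasimhan filtration of $\underline\E_k$, its maximal and minimal slopes are $\mu_1$ and $\mu_k$. Likewise $0\subset\underline\E_{k+1}/\underline\E_k\subset\cdots\subset\underline\Q$ is the Harder--Narasimhan filtration of $\underline\Q$, with maximal and minimal slopes $\mu_{k+1}$ and $\mu_m$; after the twist by $\O(1)$ these become $\mu_{k+1}+1$ and $\mu_m+1$.

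Then I would invoke the standard inequalities for the extremal slopes along a short exact sequence: for $0\rightarrow A\rightarrow E\rightarrow B\rightarrow 0$ one has $\mu_{\max}(E)\leq\max\{\mu_{\max}(A),\mu_{\max}(B)\}$ and $\mu_{\min}(E)\geq\min\{\mu_{\min}(A),\mu_{\min}(B)\}$, each proved by intersecting the maximal destabilizing subsheaf (resp. projecting to the minimal destabilizing quotient) with $A$ and $B$ and using that a slope is a weighted average along an exact sequence. Applied to the sequence above, this gives $\mu_{\max}(\underline{\hat\E^k})\leq\max\{\mu_{k+1}+1,\mu_1\}$ and $\mu_{\min}(\underline{\hat\E^k})\geq\min\{\mu_m+1,\mu_k\}$, hence
$$\Phi(\hat\E^k)\leq \max\{\mu_{k+1}+1,\mu_1\}-\min\{\mu_m+1,\mu_k\}.$$
Expanding by the elementary identity $\max\{a,b\}-\min\{c,d\}=\max\{a-c,a-d,b-c,b-d\}$ produces exactly the four terms $\mu_{k+1}-\mu_m$, $\mu_{k+1}-\mu_k+1$, $\mu_1-\mu_m-1=\Phi(\hat\E)-1$, and $\mu_1-\mu_k$, which is the claimed bound.

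The computation is otherwise routine, and I expect the only real subtlety to be conceptual rather than technical: the Hecke transform interleaves the slope data of $\underline\E_k$ and of the twisted quotient $\underline\Q(1)$ in a way that need \emph{not} respect the strictly decreasing order of the $\mu_i$, so one cannot simply identify the Harder--Narasimhan filtration of $\underline{\hat\E^k}$ and must instead argue purely through the two extremal slopes. The point I would emphasize as the crux is that the twist by $\O(1)$ forced by the negativity of $N_D$ shifts the $\underline\Q$-slopes up by exactly one, which is precisely what produces the $+1$ and $-1$ in the four terms of the bound.
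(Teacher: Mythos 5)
Your proof is correct and follows essentially the same route as the paper: both start from the exact sequence $0\rightarrow (\underline{\hat\E}/\underline\E_k)(1)\rightarrow \underline{\hat\E^k}\rightarrow \underline\E_k\rightarrow 0$ supplied by Proposition \ref{Hecktransform} together with $\mathcal N_D^*\simeq\O(1)$, and then bound only the extremal slopes via the standard sub/quotient slope estimates (the paper carries out your ``standard inequalities'' by hand, intersecting the maximal destabilizing subsheaf with the sequence for the upper bound and dualizing for the lower bound, which is equivalent to your use of the minimal destabilizing quotient). The final expansion of $\max\{\mu_{k+1}+1,\mu_1\}-\min\{\mu_m+1,\mu_k\}$ into the four stated terms matches the paper exactly.
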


\begin{proof}
By Corollary \ref{Hecktransform}, we have the following exact sequence 
\begin{equation}\label{eqn1.1}
0\rightarrow (\underline {\hat\E}/\underline \E_{k})(1)\rightarrow \underline{\hat{\E}^k}\rightarrow \underline\E_{k} \rightarrow 0.
\end{equation}
Let $0\subset \underline \E_1'\subset\cdots \underline\E'_{m'}=\underline{\hat \E^k}$ be the Harder-Narasimhan filtration of $\underline{\hat\E^k}$. Denote the slope of $\underline \E_i'/\underline \E'_{i-1}$ by $\mu_i'$.  By Equation (\ref{eqn1.1}), $\underline \E_1'$ fits into the following exact sequence 
$$
0\rightarrow \underline \G_1 \rightarrow \underline \E_1' \rightarrow \underline \G_2\rightarrow 0.
$$
where $\underline \G_1$ is a subsheaf $(\underline{\hat \E} /\underline\E_{k})(1)$ and $\underline \G_2$ is a subsheaf of $\underline\E_k$. Since $\underline \E_{k+1} /\underline \E_{k}$ is the maximal destabilizing subsheaf of $\underline{\hat{\E}}/\underline \E_{k}$, we have
$$\mu(\underline \G_1) \leq \mu_{k+1}+1$$
  Similarly 
 $$\mu(\underline \G_2) \leq \mu_1.$$ Then one has 
\begin{equation}\label{1}
\mu_1' \leq \max\{\mu_{k+1}+1, \mu_1\}.
\end{equation}
By taking the dual of Equation (\ref{eqn1.1}), one has the following exact sequence
$$
0\rightarrow \underline { \E}_{k}^{*} \rightarrow (\underline{ \hat{\E}}^k) ^* \rightarrow (\underline{\hat \E}/ \underline\E_{k})^*(-1).
$$
Similarly $(\underline\E'_{m'}/\underline\E'_{m'-1})^*$ fits into the following exact sequence
$$
0\rightarrow \underline \H_1 \rightarrow (\underline\E'_{m'}/\underline\E'_{m'-1})^* \rightarrow \underline \H_2\rightarrow 0
$$
where $\underline \H_1$ is a subsheaf of $\underline \E_{k}^{*}$ and $\underline \H_2$ is a subsheaf of $ (\underline {\hat\E}/ \underline\E_{k})^*(-1)$. Similar to the above, we have
$$\mu(\underline \H_1) \leq -\mu_k$$
and
$$\mu(\underline \H_2) \leq -\mu_m-1.$$ 
Then one has 
\begin{equation}\label{2}
-\mu_{m'}' \leq \max\{-\mu_k, -\mu_m-1\}
\end{equation}
Combining Equation (\ref{1}) and (\ref{2}), we get
$$
\mu_1'-\mu'_{m'} \leq \max\{\mu_{k+1}-\mu_m, \mu_1-\mu_m-1, \mu_{k+1}-\mu_k +1, \mu_1-\mu_k\}
$$
This finishes the proof. 
\end{proof}

Now we prove Theorem \ref{main} (I). Since $\mathcal A$ is nonempty, we can fix an element $\hat\E\in \mathcal A$. If $\Phi(\hat\E)\geq1$, we apply Lemma \ref{ErrorComparison} to $\hat{\E}$ with $k=1$ and get    $$\Phi(\hat{\E}^1)\leq \max\{\mu_2-\mu_m, \Phi(\hat\E)-1, \mu_2-\mu_1+1\}\leq \Phi(\hat\E)-1. $$
If $\Phi(\hat{\E}^1)\geq 1$, we repeat the same process for $\hat{\E}^1$. After finitely many steps, we can get $\hat{\E}'\in\mathcal{A}$ with $0\leq \Phi(\hat{\E}')<1$. The following is also clear from Lemma \ref{ErrorComparison}.
\begin{cor}\label{lemma1.14}
Suppose $\hat{\E}\in \mathcal A$ is optimal, then $\hat{\E}^k$ is also optimal for all $k$. 
\end{cor}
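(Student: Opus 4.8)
The plan is to read the conclusion straight off Lemma \ref{ErrorComparison}: under the optimality hypothesis it suffices to check that each of the four quantities inside the maximum is strictly less than $1$. Since $\Phi$ takes values in $\mathbb Q_{\geq 0}$, the bound $\Phi(\hat{\E}^k)\geq 0$ is automatic, so the only thing to prove is $\Phi(\hat{\E}^k)<1$ for every admissible $k$ (i.e. $1\leq k<m$).

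First I would record the two structural inputs that drive the estimate. The Harder-Narasimhan slopes satisfy $\mu_1>\mu_2>\cdots>\mu_m$ \emph{strictly}, and optimality of $\hat{\E}$ means precisely $\Phi(\hat\E)=\mu_1-\mu_m<1$. With these in hand I would bound the four terms of Lemma \ref{ErrorComparison} one at a time. The term $\mu_{k+1}-\mu_m$ is at most $\mu_1-\mu_m<1$ because $\mu_{k+1}\leq\mu_1$; the term $\Phi(\hat\E)-1$ is negative; the term $\mu_1-\mu_k$ is at most $\mu_1-\mu_m<1$ because $\mu_k\geq\mu_m$; and the term $\mu_{k+1}-\mu_k+1$ is strictly less than $1$ since strict monotonicity gives $\mu_{k+1}-\mu_k<0$.

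Taking the maximum, all four terms are $<1$, so Lemma \ref{ErrorComparison} yields $\Phi(\hat{\E}^k)<1$, and hence $\hat{\E}^k$ is optimal. The step I expect to matter most is the final bound $\mu_{k+1}-\mu_k+1<1$: this is the one place where the \emph{strict} decrease of the Harder-Narasimhan slopes is essential, since merely weak monotonicity would only give $\leq 1$ and break the argument. The remaining three bounds are soft consequences of $\mu_{k+1}\leq\mu_1$, $\mu_k\geq\mu_m$, and the optimality inequality $\mu_1-\mu_m<1$.
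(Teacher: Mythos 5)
Your argument is correct and is exactly the reasoning the paper intends: the paper states the corollary is "clear from Lemma \ref{ErrorComparison}," and your term-by-term verification that all four quantities in the maximum are strictly less than $1$ (using $\mu_1>\cdots>\mu_m$ and $\mu_1-\mu_m<1$) is the implicit content of that remark. You also correctly identify the one place where strict decrease of the slopes is needed, namely the bound $\mu_{k+1}-\mu_k+1<1$.
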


\begin{defi}\label{defi3.4}
We say two optimal extensions $\hat{\E}$ and $\hat{\E}'$ differ by a Hecke transform of  special type if $\hat{\E}'$ is isomorphic $\hat{\E}^k$ for some $k$.
\end{defi}

\subsection{Proof of  (II) and (III)}
\subsubsection{Meromorphic extension of sections}
The goal in this subsection is to prove the following proposition that will be needed in our discussion later.  Let $s_D\in H^0(\hat B, [D])$ be a defining section of $D$ and let $\hat{\E}$ be any reflexive sheaf over $\hat{B}$. 
\begin{prop}\label{prop3-7}
Given any $s\in H^0(\hat{B}\setminus D, \hat{\E})$, there exists a $k$ such that  $s\otimes s_D^k$ extends to a holomorphic section  of $\hat\E(k[D])$ over  $\hat{B}$. In other words, $s$ is a meromorphic section of $\hat\E$. 
\end{prop}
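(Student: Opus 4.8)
The plan is to reduce the global statement to a local one near the exceptional divisor $D$ and then exploit the structure of $\hat{B}$ as a blow-up. The essential geometric fact is that $\hat{B}$ is a proper modification of $B$ that is an isomorphism away from $D$, so $p: \hat{B}\to B$ is projective with one-dimensional fibers over $0$; combined with reflexivity of $\hat\E$, this is what will let us absorb the singular behavior of $s$ along $D$ into a power of $s_D$. First I would note that away from $D$ there is nothing to prove, so the content is entirely about the behavior of $s$ as one approaches $D$. Since $\hat\E$ is reflexive on the smooth variety $\hat B$, it is locally free in codimension one, hence locally free in a neighborhood of the generic point of the (smooth, irreducible) divisor $D$; this means I can work with $\hat\E$ as an honest vector bundle near the generic point of $D$ and control the rest by a codimension-two extension argument.

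The key steps, in order, would be the following. First, cover a neighborhood of $D$ by the two standard affine charts of the blow-up, on which $p^*\I_0$ is generated by $s_D$ and the coordinates pull back to monomials $s_D \cdot (\text{coordinate on }D)$. Second, fix a local frame of $\hat\E$ near a generic point of $D$ and write $s$ in this frame; each component is a holomorphic function on $\hat B\setminus D$ that is, in the blow-up coordinates, a holomorphic function on a product (disk in the $s_D$-direction) $\times$ (open set in $D$) with the $s_D=0$ slice removed. Third, apply a Laurent-type expansion in $s_D$: a holomorphic function on a punctured disk bundle that extends meromorphically is exactly one with a finite-order pole, and the aim is to bound the pole order uniformly in the $D$-directions. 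Fourth, promote the pointwise meromorphic extension to a global one: after multiplying by a sufficiently large power $s_D^k$, the section $s\otimes s_D^k$ extends holomorphically across the generic point of $D$, hence defines a section of $\hat\E(k[D])$ over $\hat B$ minus a codimension-two set, and by reflexivity (Hartogs-type extension for reflexive sheaves, as used earlier in the paper) it extends across that remaining set.

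The main obstacle I expect is step three: obtaining a \emph{uniform} bound on the order of the pole along $D$, so that a single $k$ works over all of $D$ rather than a $k$ that blows up as one moves within $D$. Pointwise, each component of $s$ has some finite pole order because $s$ is algebraic/meromorphic on the quasi-projective variety $\hat B$ — but a priori that order could grow without bound along $D$. The way I would handle this is to use properness of $p$ together with coherence: the sheaf-theoretic direct image $p_*(\hat\E(k[D]))$ stabilizes, and for $k\gg 0$ every section of $\hat\E$ over $\hat B\setminus D$, viewed as a section of $p^*\E$ over $B\setminus\{0\}$ and hence (by normality of $B$ and reflexivity of $\E$) as a genuine section of $\E$ over $B$, pulls back and acquires at worst a fixed-order pole controlled by the multiplicity of $s_D$ in the pullback of a defining function of $0$. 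In other words, the uniformity comes not from a fiberwise estimate but from the finiteness built into the projective morphism $p$ and the coherence of the pushforward; this is the step where the negativity and compactness of $D$ in $\hat B$ are really used.
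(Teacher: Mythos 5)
There is a genuine gap at your step three, and it sits exactly at the heart of the proposition. You assert that ``pointwise, each component of $s$ has some finite pole order because $s$ is algebraic/meromorphic on the quasi-projective variety $\hat B$.'' But $\hat B$ is the blow-up of an analytic ball and $s$ is merely a holomorphic section over $\hat B\setminus D$; nothing rules out a priori an essential singularity along $D$, and ruling that out \emph{is} the content of the proposition (see the remark following it in the paper: for a non-exceptional divisor such as $\{0\}\subset\Delta\subset\C$ the statement is false). So the finite principal part of your Laurent expansion is assumed rather than proved. Your proposed repair does not close the gap. The observation that $s$ corresponds, via $\hat B\setminus D\cong B\setminus\{0\}$ and the normality of the reflexive sheaf $\E$, to a genuine section $\tilde s\in H^0(B,\E)$ is correct and useful; but $p^*\tilde s$ is a section of $(p^*\E)^{**}$, not of $\hat\E$, and showing that the identification $(p^*\E)^{**}|_{\hat B\setminus D}\cong\hat\E|_{\hat B\setminus D}$ extends with a finite-order pole along $D$ is again an instance of the statement being proved, applied to the reflexive hull of the sheaf of homomorphisms between the two extensions. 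Likewise, stabilization of the increasing chain of coherent subsheaves $p_*(\hat\E(k[D]))\subset\E$ shows the chain is eventually constant near $0$, not that its union exhausts $H^0(\hat B\setminus D,\hat\E)$; and the usual commensurability argument for two coherent extensions across a divisor (write the transition map on finitely many stalk generators and clear denominators) relies on germs along $D$ of sections defined off $D$ being localizations of stalks, which holds in the algebraic category but fails analytically --- again for the essential-singularity reason.

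The missing mechanism is the one the paper supplies: the negativity of $[D]|_D=\O(-1)$ must be used analytically, not merely as a finiteness principle. In the locally free case ($n=2$) the paper globally generates $\hat\E^*(-k[D])$ over $\hat B_{1/2}$ for $k\gg 0$ by H\"ormander's $L^2$ method on this weakly pseudoconvex domain, obtaining an embedding $\hat\E(k[D])\hookrightarrow\O^{n_1}$; the components of $s\otimes s_D^k$ then become honest holomorphic functions on $\hat B_{1/2}\setminus D\cong B_{1/2}\setminus\{0\}$, which extend across the origin by Hartogs, and the extended section still lies in the subsheaf $\hat\E(k[D])$ by continuity. For $n\geq 3$ and $\hat\E$ merely reflexive, the paper slices by lines $\C\P^1\subset\C\P^{n-1}$ avoiding the image of $\Sing(\hat\E)$, applies the $n=2$ case on each slice $\phi^{-1}(\C\P^1)$, and globalizes using Siu's Hartogs-type extension theorem for coherent sheaves together with Shiffman's removable-singularity lemma across the codimension-two bad set. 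Without some substitute for this global-generation/vanishing input, your outline cannot be completed as written.
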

\begin{rmk}
It is a key assumption here that $[D]$ is an exceptional divisor, since otherwise the statement is false. For example, if we consider $D=\{0\}\subset \Delta$, where $\Delta=\{|z|<1\}\subset \C$, and consider the trivial sheaf $\O$, then we have holomorphic functions on $\Delta\setminus\{0\}$ with an essential singularity at $0$ which can not extend to be meromorphic functions on $\Delta$.
\end{rmk}

\begin{proof}[Proof of the case $n=2$]
In this case $D=\C\P^1$, and $\hat \E$ is a locally free. Denote $\hat B_{t}:=p^{-1}(B_{t})$ where $B_{t}$ denote the ball of radius $t\in (0,1)$ centered at $0$.

It suffices to construct the following exact sequence over $\hat{B}_{\frac{1}{2}}$ for $k\in \mathbb{Z}$ large enough
$$
0\rightarrow R\rightarrow \O^{n_1} \rightarrow \hat{\E}^*(-k[D])\rightarrow 0.
$$
Indeed, given this exact sequence, by taking the double dual, we have 
$$
0\rightarrow  \hat{\E}(k[D])\rightarrow \O^{n_1} \rightarrow R^* \rightarrow  0.
$$
Then $s\otimes s_D^k|_{\hat{B}_{\frac{1}{2}}}\in H^{0}(\hat{B}_{\frac{1}{2}}\setminus D, \hat \E (k[D]))$ can be viewed as a section in $H^{0}(\hat{B}_{\frac{1}{2}}\setminus D, \O^{n_1})$. By Hartog's theorem for holomorphic functions, we know $H^{0}(\hat{B}_{\frac{1}{2}}\setminus D, \O^{n_1})=H^{0}(\hat{B}_{\frac{1}{2}}, \O^{n_1})$. Then $s\otimes s_D^k|_{\hat{B}_{\frac{1}{2}}} \in H^{0}(\hat{B}_{\frac{1}{2}}, \O^{n_1})$. By continuity, we have $s\otimes s_D^k|_{\hat{B}_{\frac{1}{2}}} \in H^{0}(\hat{B}_{\frac{1}{2}}, \hat \E(k[D]))$. 

Now we fix a K\"ahler metric $\hat \omega$ on $\hat{B}$. In order to construct the exact sequence above, it is equivalent to constructing a set of global generators for $\hat \E^*(-k[D])$ over $\hat B_{\frac{1}{2}}$ for $k$ large. This can done by the standard H\"ormander technique, see for example Theorem $5.1$ in \cite{Demailly}. Indeed, we know $\hat{B}_{\frac{1}{2}}$ is weakly pseudo-convex, and since $[D]|_D=\O(-1)$ is negative, one can easily construct a hermitian metric $h$ on $\hat\E^*(-k[D])$ for $k$ large,  such that 
$$
\sqrt{-1}F_{h_k}\geq C k \hat\omega\otimes\text{Id}.
$$ 
Now the conclusion follows from standard $L^2$ solution to the $\bp$-problem, using singular weight. 
\end{proof}

\begin{proof}[Proof of the general case] Suppose $n\geq 3$ and $\hat{\E}$ is a reflexive sheaf defined  $\hat{B}$. Let $S=\phi(\text{Sing}(\hat{\E}))\cap \overline{\hat B_{\frac{3}{4}}}$ and $\hat{S}=\phi^{-1}(S)\cap \hat{B}_{\frac{3}{4}}$. By replacing $\hat{B}_{\frac{3}{4}}$ with $\hat{B}$ which does not affect the argument, we can assume $S$ is a closed subset in $\C\P^{n-1}$ of Hausdorff of codimension at least $4$ and so is $\hat{S}$ in $\hat{B}$. Furthermore, $\text{Sing}(\hat\E)\subset \hat{S}$.

By Proposition $4$ in \cite{Siu}, it suffices to prove that for any $z\in \C\P^{n-1}\setminus S$, $s|_{\phi^{-1}(z)}$ is a meromorphic section of $\hat{\E}|_{ \phi^{-1}(z)}$. Indeed, given this, by Proposition $4$ in \cite{Siu}, we know $s$ is a meromorphic section of $\hat{\E}|_{\hat B \setminus \hat S}$ which is holomorphic outside $D$. Then for some $k$, $s\otimes s_D^k$ is a holomorphic section of $\hat{\E}(k[D])|_{\hat{B}\setminus \hat S}$. Since $\hat{S}$ has Hausdorff codimension at least $4$,  $s\otimes s_D^k$ further extends to be a section in $H^{0}(\hat{B}, \hat{\E}(k[D]))$ (see Lemma 3 in \cite{Shiffman}). Now we show $s|_{\phi^{-1}(z)}$ is a meromorphic section of $\hat{\E}|_{ \phi^{-1}(z)}$ for any $z\in \C\P^{n-1}\setminus S$. Since $S$ has Hausdorff of codimension at least $4$ in $\C\P^{n-1}$, we can choose a complex line $\C\P^{1}\subset \C\P^{n-1}$ which does not intersect $S$ but contains $z$. Let $\hat{B}^2=\phi^{-1}(\C\P^{1})$. Then $\hat{\E}|_{\hat{B}^2}$ is locally free. By the case $n=2$ proved above,  $s|_{\hat B^2\setminus (D\cap \hat B^2)}$ is a meromorphic section of $\hat{\E}$ over $\hat{B}^2$. In particular,  $s|_{\phi^{-1}(z)}$ is a meromorphic section of $\hat{\E}|_{ \phi^{-1}(z)}$. This finishes the proof. 
\end{proof}

\subsubsection{Uniqueness}\label{uniqueness}
We will prove (II) and (III) in this section.  Suppose $\hat\E$ and $\hat\E'$ are two optimal extensions of $\E$ at $0$. We denote $\underline{\hat \E}=\iota^* \hat{\E}$ and $\underline{\hat \E'}=\iota^* \hat{\E}'$. Let 
$$0\subset \underline \E_1\subset \cdots \underline \E_{m}=\underline{\hat \E}$$
and 
$$0\subset \underline \E'_1 \subset \cdots \underline\E'_{m'}=\underline{\hat \E}'$$
be the Harder-Narasimhan filtrations of $\underline {\hat\E}$ and $\underline{\hat \E}'$ respectively. If we denote $\mu_i:=\mu(\underline \E_i/\underline \E_{i-1})$ and 
$ \mu_i':=\mu(\underline \E'_{i} /\underline \E'_{i-1})$, then by assumption we have 
$$\mu_1-\mu_m<1, \mu_1'-\mu_m'<1,$$ 
and there exists a natural isomorphism  $
\rho: \hat{\E}|_{\hat{B}\setminus D}\rightarrow\hat{\E}'|_{\hat{B}\setminus D}.$  By Proposition \ref{prop3-7}, $\rho$ is a meromorphic section of $\hat{\E}^*\otimes \hat\E'$. Suppose $\det\rho$ has a pole of order $k\in \Z$ along $D$. If we write $k=d\cdot \rk(\E)+k_0$ with $0\leq k_0<\rk(\E)$, then by replacing $\hat\E$ with $\hat{\E}(d[D])$ and $\rho$ by $\rho\otimes s_D^{\otimes d}$ we may assume $0\leq k<\rk(\E)$. 

Denote 
$$\underline \rho=\iota^* \rho, \ \ \ \ \underline\rho^{-1}=\iota^* \rho^{-1}.$$
Then  $\underline \rho$ and $\underline \rho^{-1}$ can be viewed as two \emph{nontrivial} holomorphic sections 
$$\underline \rho: \underline{\hat{\E}}\rightarrow \underline{\hat{\E}}'(-l_0), \ \ \ \  \underline \rho^{-1}: \underline{\hat{\E}}' \rightarrow \underline{\hat{\E}}(-l'_0),$$
for some $l_0,l_0'\in \mathbb{Z}_{+}$. Let $k$ be the smallest integer such that $\underline\rho|_{\underline\E_{k+1}}\neq 0$. Then $\underline \rho$ descends to be a nontrivial holomorphic map $\underline \rho: \underline{\hat\E}/\underline \E_{k} \rightarrow \underline{\hat{\E}'}(-l_0)$ which restrict to be nonzero on $\underline \E_{k+1}/\underline \E_{k}$. Since $\underline \E'_1(-l_0)$ is the maximal destabilizing subsheaf of $\underline{\hat{\E}}'(-l_0)$, we have $\mu'_1-l_0 \geq \mu_{k+1}.$ Similarly $\mu_1-l'_0\geq\mu'_j$ for some $j$.  Then we have
$$2>\mu_1'-\mu_j'+\mu_1-\mu_{k+1} \geq l_0+l_0',$$
which implies exactly one of the following hold
\begin{itemize}
\item[(a).] $l_0=0$;
\item[(b).] $l_0=1$.
\end{itemize}
Suppose first (a) holds, then by assumption, $\rho$ can be extended to be a holomorphic section across $D$ and thus $\det(\rho)$ is also a holomorphic section of $\det(\hat{\E}^*)\otimes \det(\hat{\E}')$ over $\hat{B}$. However, by assumption we know $\det(\rho)$ has a pole of order $k_0\geq 0$. Then we must have $k_0=0$, i.e. $\det(\rho)|_{D}\neq 0$ which implies $\det(\rho)(z)\neq 0$ for any $z\in \hat{B} \setminus \text{Sing}(\hat{\E})\cup \text{Sing}(\hat{\E}')$. In particular, $\rho$ is an isomorphism away from complex codimension two and hence must be an isomorphism.  Notice this already finishes the proof of Part (II) of Theorem \ref{main} since under the assumption of (II) we know (a) must hold.

Now suppose (b) holds, i.e. $l_0=1$ and $l'_0=0$. By assumption, $\rho$ can be viewed as a holomorphic map $\rho: \hat{\E} \rightarrow \hat{\E}'([D])$
with $\underline\rho: \underline{\hat{\E}} \rightarrow \underline{\hat{\E}}'(-1)$ being nonzero and  $\rho^{-1}: \hat{\E}' \rightarrow \hat{\E}$ is a holomorphic map with $\underline\rho^{-1}:\underline{ \hat{\E}}' \rightarrow \underline{\hat{\E}}$ being nonzero. Then $\rho^{-1}$ is a sheaf monomorphism since $\hat{\E}'$ is reflexive and $\text{ker}(\rho^{-1})$ is supported on $D$. In the following, we do not distinguish between $\hat{\E}'$ and the image $\rho^{-1}(\hat{\E}')$ in $\hat{\E}$. Let $D'=Sing(\hat{\E})\cup Sing(\hat{\E}') \cup Sing(\underline{\hat{\E}}/\underline\E_{k})$.

\

To finish the proof of (III), it suffices to prove
\begin{clm}\label{exact}
$(\hat{\E}/\hat{\E}')|_{\hat{B}\setminus D'}\cong \iota_* (\underline {\hat{\E}}/\underline{\E}_{k})|_{\hat{B}\setminus D'}.$ 
\end{clm}
Indeed, given Claim \ref{exact}, we have the following exact sequence outside $D'$
$$
0\rightarrow \hat{\E}'\rightarrow\hat{\E} \rightarrow \iota_{*}(\underline {\hat{\E}}/\underline{\E}_{k})\rightarrow 0.
$$ 
By definition, we have $\hat{\E}'=\hat{\E}^k$ outside $D'$ where $\hat{\E}^k$ denotes the Hecke transform of $\hat{\E} $ along $\underline \E_k$. Since $\hat{\E}'$ and $\hat{\E}^k$ are both reflexive, they must be isomorphic. 

\begin{proof}[Proof of Claim \ref{exact}]

First we prove that $\hat{\E}/\hat{\E}'=\iota_* \iota^*(\hat{\E}/\hat{\E}')$.  To see this it suffices to show that for any local section $s$ of $\hat{\E}$, $z_n s \in \hat{\E}'$. Here $z_n$ denotes the local defining function for $D$ after choosing a local coordinate. Indeed, by assumption, $z_n \rho(s)$ is a local holomorphic section. We also know that $\rho^{-1}(z_n\rho(s))=z_n s$, which implies  $\I_D\hat{\E}  \subset \rho^{-1}(\hat{\E}')$. As a result, $\iota_* \iota^*(\hat{\E}/\hat{\E}')=\hat{\E}/\hat{\E}'$.

So it suffices to prove $\iota^*(\hat\E/\hat{\E}')=\underline {\hat\E}/\underline\E_k$ on $D\setminus D'$. Since all these sheaves are locally free away from $D'$ this boils down to showing  $\underline \rho^{-1}(\underline{\hat{\E}'})=\underline{\E}_{k}$  on $D\setminus D'$. 

We first show $\text{Im}(\underline\rho^{-1})\subset \underline \E_{k}$. If not, there exists a nontrivial map 
$$
\underline \rho^{-1}: \underline{\hat{\E}}' \rightarrow  \underline{\hat\E}/\underline\E_{k}
$$
which implies $\mu_j'\leq \mu_{k+1}$ for some $j$. Meanwhile, by assumption, $\underline\rho$ descends to be a nontrivial map as $\underline \rho: \underline{\hat{\E}}/\underline{\E}_{k} \rightarrow \underline{\hat{\E}}'(-1)$ which implies $\mu_1' -1\geq \mu_{k+1}$. Then we have 
$$
\mu_1'-\mu_{m'}' \geq \mu_1'-\mu_j'\geq 1
$$ 
which is a contradiction. Now we prove that $\text{Im}(\underline\rho^{-1}(z))= \underline \E_{k}|_z$ for $z\in D\setminus D'$.  It suffices to prove 
$$
\rk(\underline\rho(z))+\rk(\underline\rho^{-1}(z))\geq \rk(\E)
$$
for $z\in D\setminus D'$. Now we fix $z\in D\setminus D'$ and choose  local coordinates $ (z_1,\cdots z_n )$ so that $z_n$ is the local defining function for $ D$. After choosing a local trivialization for both $\hat{\E}$ and $\hat{\E}'$ near $z$, we can view $\rho$ and $\rho^{-1}$ as a matrix. By doing Taylor expansion, we can assume
$$
\rho^{-1}=A_0+A_1 z_n +\cdots
$$
and 
$$
z_n \rho=B_0+ B_1z_n+\cdots$$
where $A_i$ and $B_i$ are matrices of holomorphic functions independent of $z_n$. Since $\rho^{-1} \circ (z_n\rho) =z_n \Id$,  by comparing the coefficients in front of $z_n$ we get 
$$A_0 B_1+A_1B_0=\Id, $$
 which implies 
$$ 
\begin{aligned}
\rk(A_0)+\rk(B_0)
&\geq \rk(A_0B_1)+\rk(A_1B_0) \\
&\geq \rk(A_0B_1+A_1B_0)\\
&=\rk(\E).
\end{aligned}
$$
By definition, we have
$$\rk(\underline\rho(z))+\rk(\underline\rho^{-1}(z))=\rk(A_0)+\rk(B_0)\geq \rk(\E).$$
 This then finishes the proof. 
\end{proof}

\subsection{Proof of (IV)}\label{proofIV}
Now we assume $\E$ is homogeneous i.e. $\E\simeq \psi_*\pi^*\underline \E$ for reflexive $\underline \E$ over $\C\P^{n-1}$. Let $0=\underline \E_0 \subset \underline \E_1 \cdots \subset \underline\E_m=\underline \E$ be the Harder-Narasimhan filtration of $\underline \E$ and denote $\mu_k=\mu(\underline{\E}_k/\underline{\E}_{k-1})$. Note $\phi^*\underline \E \in \mathcal{A}$. Let $j_0=0$ and define
$$
j_{k+1}:=\max\{s> j_{k}: \mu_1-\mu_{s}-\lfloor{\mu_1-\mu_{j_k+1}}\rfloor<1, s\leq m\}
$$
inductively for $k\geq 1$. Let $l$ be the largest integer so that $j_l$ is defined. Then we define the \emph{partial} Harder-Narasimhan filtration as 
$$
0=\underline\E_{j_0} \subset \underline\E_{j_1} \subset \underline\E_{j_2} \subset \cdots \underline\E_{j_l} \subset \underline \E.
$$
Let $n_k=\lfloor{\mu_1-\mu_{j_k+1}}\rfloor$ for $0\leq k\leq l-1$ and define 
$$\widetilde{Gr}(\underline \E):=\oplus^{l}_{i=1} (\underline{\E}_{j_i}/\underline{\E}_{j_{i-1}})(n_{i-1}).$$
Then to prove (IV), it suffices to show
\begin{prop}
There exists an optimal extension $\hat{\E}\in \mathcal{A}$ so that $\underline{\hat{\E}}\cong\widetilde{Gr}(\underline \E)$. 
\end{prop}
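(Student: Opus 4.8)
The plan is to build the required extension by starting from the tautological pullback $\phi^*\underline\E$ and applying an explicit sequence of Hecke transforms along the terms of the Harder--Narasimhan filtration, and to exploit the homogeneity of $\E$ to show that each successive restriction to $D$ splits as a direct sum. First I would record the starting data. Since $\E\simeq\psi_*\pi^*\underline\E$, the sheaf $\phi^*\underline\E$ is reflexive and lies in $\mathcal A$ (on $\hat B\setminus D$ one has $\phi=\pi\circ p$, so $\phi^*\underline\E$ agrees with $p^*\E$ there), and $\iota^*(\phi^*\underline\E)=\underline\E$ because $\phi\circ\iota=\mathrm{id}_D$. The crucial feature of the homogeneous case is that, on the analytic (equivalently, formal) neighborhood of $D$, the space $\hat B$ is a neighborhood of the zero section in the total space of $\mathcal N_D\cong\O(-1)$, so after pushing forward by $\phi$ the relevant sheaves become $\Z_{\geq 0}$-graded: $\phi_*\O=\bigoplus_{d\geq 0}\O(d)$ with $\I_D=\bigoplus_{d\geq 1}\O(d)$, the section $s_D$ acts as the degree-raising operator, $\phi^*\underline\E$ corresponds to $\bigoplus_{d\geq 0}\underline\E(d)$, and $\iota^*$ is reduction modulo $\I_D$.

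The key computation is a splitting statement. For a saturated subsheaf $\underline F\subset\underline\E$, the Hecke transform of $\phi^*\underline\E$ along $\underline F$ corresponds in this graded picture to the submodule whose degree-$0$ part is $\underline F$ and whose degree-$d$ part is $\underline\E(d)$ for $d\geq 1$. Reducing modulo $\I_D$ and using that the multiplication $\O(1)\otimes\O(d-1)\to\O(d)$ is an isomorphism, so that $(\I_D\cdot M)_d=\underline F_{d-1}(d)$ in each degree, yields
\[ \iota^*(\text{Hecke along }\underline F)\;=\;\underline F\ \oplus\ (\underline\E/\underline F)(1), \]
where the splitting is induced by the grading. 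The same degree-by-degree computation gives the general step I need: if $\hat\F$ is one of the sheaves in the construction, with $\iota^*\hat\F=\underline G\oplus\underline T(t)$ split according to the grading ($\underline G$ the ``frozen'' part in low degrees, $\underline T(t)$ the tail in top degree $t$), then the Hecke transform of $\hat\F$ along the saturated subsheaf $\underline G$ has $\iota^*=\underline G\oplus\underline T(t+1)$. This is the one place where homogeneity is indispensable: without the grading the a priori extension $0\to\underline T(t+1)\to\iota^*(\text{Hecke})\to\underline G\to 0$ need not split.

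With the splitting lemma in hand I would process the blocks of the partial filtration one at a time. Beginning with $\phi^*\underline\E$, I perform $n_1$ Hecke transforms along (the successive incarnations of) $\underline\E_{j_1}$; by the lemma the restriction becomes $\underline\E_{j_1}\oplus(\underline\E/\underline\E_{j_1})(n_1)$, so that the second block $(\underline\E_{j_2}/\underline\E_{j_1})(n_1)$ sits at its correct twist $n_1$. Freezing this block and repeating, $n_i-n_{i-1}$ further transforms along $\underline\E_{j_i}$ raise the tail twist from $n_{i-1}$ to $n_i$. Continuing through $i=l-1$, and using $\underline\E_{j_l}=\underline\E$ (which holds because the recursion defining $j_{k+1}$ never stalls, since $s=j_k+1$ always satisfies $\mu_1-\mu_s-n_k=\{\mu_1-\mu_{j_k+1}\}<1$), I obtain $\hat\E\in\mathcal A$ with
\[ \iota^*\hat\E\;=\;\bigoplus_{i=1}^{l}(\underline\E_{j_i}/\underline\E_{j_{i-1}})(n_{i-1})\;=\;\widetilde{Gr}(\underline\E). \]
Each transform is taken along a saturated subsheaf (the cokernel is a twist of the torsion-free $\underline\E/\underline\E_{j_i}$), so by the reflexivity lemma of Section~2 every sheaf produced is reflexive; and since Hecke transforms are isomorphisms off $D$, the result stays in $\mathcal A$.

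Optimality is then immediate and independent of the construction: the Harder--Narasimhan factors of $\widetilde{Gr}(\underline\E)$ have slopes $\mu_s+n_{i-1}$ for $j_{i-1}<s\leq j_i$, and the inequalities defining $j_i$ together with $n_{i-1}=\lfloor\mu_1-\mu_{j_{i-1}+1}\rfloor$ give both $\mu_s+n_{i-1}\leq\mu_1$ and $\mu_{j_i}+n_{i-1}>\mu_1-1$; hence every slope lies in $(\mu_1-1,\mu_1]$ and $\Phi(\hat\E)=\mu_{\max}-\mu_{\min}<1$, so $\hat\E$ is optimal. The main obstacle I expect is the splitting lemma: making rigorous the claim that the grading coming from homogeneity survives each Hecke transform and splits the restriction to $D$. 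The honest route is to carry out the computation in the graded module $\bigoplus_d\underline F_d(d)$ over the formal neighborhood of $D$, where $\underline F_0\subseteq\underline F_1\subseteq\cdots$ is the resulting increasing chain of Harder--Narasimhan subsheaves stabilizing to $\underline\E$, and to verify the reduction modulo $\I_D$ degree by degree; once this is set up, reflexivity, membership in $\mathcal A$, and optimality are bookkeeping.
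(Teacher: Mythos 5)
Your proposal is correct and follows essentially the same route as the paper: both start from $\phi^*\underline \E$ and perform iterated Hecke transforms along the successively frozen pieces of the partial Harder--Narasimhan filtration, with homogeneity forcing each restriction to $D$ to split. The only real difference is presentational --- you justify the splitting and the persistence of homogeneity via the graded-module picture for sheaves near the zero section of $\O(-1)$, whereas the paper tracks the explicit sheaf inclusions $\phi^*\underline \E_{j_i}\subset \hat{\E}^k$ through the induction; these are the same mechanism, and your explicit verifications that $\underline\E_{j_l}=\underline\E$ and that all slopes of $\widetilde{Gr}(\underline\E)$ lie in $(\mu_1-1,\mu_1]$ (hence optimality) fill in details the paper leaves implicit.
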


\begin{proof}
It suffices to prove the following by induction on $k$ with $1\leq k \leq l-1$. (The reason to write inductions in this way will be justified by the proof naturally.)
\begin{itemize}
\item[$(a)_k$] there exists $\hat{\E}^k \in \mathcal{A}$ with $\underline{\hat{\E}}^k\cong\oplus_{i=1}^k (\underline\E_{j_i}/\underline\E_{j_{i-1}})(n_{i-1})\oplus (\underline \E/\underline \E_{j_k})(n_k)$;
\item[$(b)_k$] there exists the following sheaf inclusions for $1\leq i\leq k$ which are compatible with the splittings in $(a)_k$ 
\begin{itemize}
\item $\phi^*\underline \E_{j_1} \subset \hat{\E}^k$;
\item Let $\hat{\E}^k_1:=\hat{\E}^k$,  then we can define $\hat{\E}^k_{i+1}=\hat{\E}^k_i/\phi^*((\underline \E_{j_{i}}/\underline \E_{j_{i-1}})(n_{i-1}))$ for $1\leq i\leq k-1$ inductively and $\phi^*((\underline \E_{j_{i+1}}/\underline \E_{j_{i}})(n_{i}))\subset\hat{\E}^k_{i+1}$ for $i=1,\cdots k-1$;
\item $\hat{\E}^k_k/\phi^*((\underline \E_{j_{k}}/\underline \E_{j_{k}})(n_{k-1}))=\phi^*((\underline \E/\underline \E_{j_{k}})(n_{k}))$.
\end{itemize}
\end{itemize}
For $k=1$, we let $\hat{\E}^{1,1}$ be the Hecke transform of $\phi^*\underline \E$ along $\underline \E_{j_1}$. By Proposition \ref{Hecktransform}, we have the following exact sequence
$$
0\rightarrow (\underline{\E}/\underline{\E}_{j_1})(1)\rightarrow \underline{\hat{\E}}^{1,1} \rightarrow \underline \E_{j_1}\rightarrow 0.
$$
By definition, there exists a natural sheaf inclusion $ \phi^*\underline{\E}_{j_1}\subset\hat{\E}^{1,1}$ which restricts to be a map from $\underline \E_{j_1}$ to $\underline{\hat{\E}}^{1,1}$ that splits the exact sequence above i.e.  $\underline{\hat{\E}}^{1,1}\cong\underline \E_{j_1}\oplus (\underline{\E}/\underline{\E}_{j_1})(1)$. Indeed, we know that $\phi^*\underline \E_{j_1}$ lies in the kernel of the surjective map $\phi^*\underline\E \rightarrow \iota_* (\underline \E/\underline \E_{j_1})$ and thus we have a natural sheaf inclusion $\phi^*\underline \E_{j_1}\subset\hat{\E}^{1,1}$ by definition. (This is the key difference in the homogeneous case from the general case where we have a natural inclusion $\phi^*(\underline \E_{j_1}) \subset \hat{\E}^{1,2}$. ) The restriction map splitting the exact sequence above is tautological. Moreover, by definition, we have
$$
0\rightarrow \hat{\E}^{1,1}/\phi^*(\underline \E_{j_1}) \rightarrow \phi^*(\underline\E/\underline{\E}_{j_1}) \rightarrow \iota_*(\underline \E/\underline \E_{j_1}) \rightarrow 0
$$
which implies $\hat{\E}^{1,1}/\phi^*(\underline \E_{j_1})=\phi^*(\underline \E/\underline \E_{j_1})(-[D])=\phi^*(\underline \E/\underline \E_{j_1}(1))$. (This is another key difference in the homogeneous case from the general case. That is the quotient sheaf $\hat{\E}^{1,2}/\phi^*\underline \E_{j_1}$ is still homogeneous , i.e. it is pulled back from the projective space. ) If $n_1>1$, let $\hat{\E}^{1,2}$ be the Hecke transform of $\hat{\E}^{1,1}$ along $\underline \E_{j_1}$. Similarly, we have 
$$
0\rightarrow (\underline{\E}/\underline{\E}_{j_1})(2)\rightarrow \underline{\hat{\E}}^{1,2} \rightarrow \underline \E_{j_1}\rightarrow 0
$$
and by definition, we have a sheaf inclusion $\phi^*\underline{\E}_{j_1}\subset \hat{\E}^{1,2}$ which restricts to be a map that splits the exact sequence above i.e. $\underline{\hat{\E}}^{1,2}\cong (\underline{\E}/\underline{\E}_{j_1})(2)\oplus \underline \E_{j_1}$. By definition,  we also have the following exact sequence 
$$
0\rightarrow \hat{\E}^{1,2}/\phi^*\underline \E_{j_1} \rightarrow \phi^*((\underline \E/\underline \E_{j_1})(1) \rightarrow \iota_*((\underline \E/\underline \E_{j_1})(1)) \rightarrow 0
$$
which implies $\hat{\E}^{1,2}/\phi^*\underline \E_{j_1}=\phi^*((\underline \E/\underline \E_{j_1})(2))$. Then one can keep doing Hecke transform for $\hat{\E}^{1,2}$ along $\underline \E_{j_1}$ if necessary and get $\hat{\E}^1:=\hat{\E}^{1,n_1}\in\mathcal{A}$ satisfying
\begin{itemize}
\item[$(a)_1$] $\underline{\hat{\E}}^1\cong \underline\E_{j_1}\oplus (\underline \E/\underline \E_{j_1})(n_1)$;
\item[$(b)_1$] there exists a sheaf inclusion $\phi^*\underline \E_{j_1} \subset \hat{\E}^1$ which is compatible with the splitting above 
and $\hat{\E}^1/\phi^*(\underline \E_{j_1})=\phi^*(\underline \E/\underline \E_{j_{1}}(n_1))$.
\end{itemize}
Namely, after we do Hecke transform along $\underline \E_{j_1}$, $\phi^*\underline \E_{j_1}$ will always be a saturated subsheaf of the new sheaf which will give a splitting on the central fiber. And the natural quotient sheaf is still homogeneous. In the case of sub-bundles, one can use the bundle construction in Section \ref{Section2.1} to achieve the above result in one step.  

\

To make the argument more clear, we will explain how to do $k=2$ briefly. (Details can be found in the induction for the general case. )  Given $(a)_1$ and $(b)_1$, we can keep doing Hecke transform along $\phi^*\underline \E_{j_1} \oplus \phi^*(\underline \E_{j_2}/\underline \E_{j_1}(n_1))$ to get a new sheaf $\hat{\E}^2$. And we have two sheaf inclusions $\phi^*\underline \E_{j_1} \subset \hat{\E}^2$ and $\phi^*(\underline \E_{j_2}/\E_{j_1}(n_1))\subset \hat{\E}^2/\phi^*\underline \E_{j_1}$ which restricts to be maps that split the central fiber as we want. Furthermore, we have 
$$
(\hat{\E}^2/\phi^*(\underline \E_{j_1}))/\phi^{*}(\underline \E_{j_2}/\underline \E_{j_1} (n_1))=\phi^*(\underline \E /\underline \E_{j_2} (n_2))
$$
where $n_2$ is equal to the number of Hecke transforms along $\phi^*\underline \E_{j_1} \oplus \phi^*(\underline \E_{j_2}/\underline \E_{j_1}(n_1))$ to $\hat{\E}^2$. 

Now we do the induction in general. Suppose we have proved $(a)_k, (b)_k$, we want to build the statements $(a)_{k+1}$ and $(b)_{k+1}$. First let $\hat{\E}^{k+1,1}$ to be the Heck transform of $\hat{\E}^k$ along $\oplus_{i=1}^k (\underline\E_{j_i}/\underline\E_{j_{i-1}})(n_{i-1})\oplus (\underline \E_{j_{k+1}}/\underline \E_{j_k})(n_k)$.  By Proposition \ref{Hecktransform} we have the following exact sequence
$$
0\rightarrow (\underline \E/\underline \E_{j_{k+1}})(n_k+1)\rightarrow 
\underline{\hat{\E}}^{k+1,1}\rightarrow \oplus_{i=1}^k (\underline\E_{j_i}/\underline\E_{j_{i-1}})(n_{i-1})\oplus (\underline \E_{j_{k+1}}/\underline \E_{j_k})(n_k)\rightarrow 0.
$$
Then $(b)_{k}$ holds by replacing $\hat{\E}^k$ with $\hat{\E}^{k+1,1}$ except the last one which needs to be changed. More precisely, there exists the following sheaf inclusions for $1\leq i\leq k$ which are compatible with the splittings in $(a)_k$ 
\begin{itemize}
\item $\phi^*\underline \E_{j_1} \subset \hat{\E}^{k+1,1}$;
\item if we let $\hat{\E}^{k+1}_1:=\hat{\E}^{k+1,1}$ and define $\hat{\E}^{k+1}_{i+1}=\hat{\E}^{k+1}_i/\phi^*((\underline \E_{j_{i}}/\underline \E_{j_{i-1}})(n_{i-1}))$ for $1\leq i\leq k-1$ inductively, then $\phi^*((\underline \E_{j_{i+1}}/\underline \E_{j_{i}})(n_{i}))\subset\hat{\E}^{k+1}_{i+1}$ for $i=1,\cdots k-1$;
\item $\phi^*((\underline \E_{j_{k+1}} /\underline \E_{j_k})(n_k))\subset \hat{\E}^{k+1,1}_{k+1}$ and 
$$\hat{\E}^{k+1,1}_{k+1}/\phi^*((\underline \E_{j_k+1} /\underline \E_{j_k})(n_k))=\phi^*(\underline\E/\underline\E_{j_{k+1}} (n_k+1)).$$
\end{itemize}
Indeed, by definition, we have 
$$
0\rightarrow \hat{\E}^{k+1,1}\rightarrow \hat{\E}^k \rightarrow \iota_*(\oplus_{i=1}^k (\underline\E_{j_i}/\underline\E_{j_{i-1}})(n_{i-1})\oplus (\underline \E_{j_{k+1}}/\underline \E_{j_k})(n_k))\rightarrow 0
$$
Combining this with that $\hat{\E}^k$ satisfies property $(a)_k$ and $(b)_k$, we can easily get the sheaf inclusions with required properties above.  Now we have  
$$
\underline{\hat{\E}}^{k+1,1}=\oplus_{i=1}^{k+1} (\underline\E_{j_i}/\underline\E_{j_{i-1}})(n_{i-1})\oplus (\underline \E /\underline \E_{j_{k+1}})(n_k+1).
$$ 
Indeed, the sheaf inclusion $\phi^*\underline \E_{j_1} \subset \hat{\E}^{k+1,1}$ restricts to be a map that gives a splitting  $\hat{\E}^{k+1,1}=\underline \E_{j_1} \oplus \iota^*\hat{\E}^{k+1}_2$. For $\iota^*\hat{\E}^{k+1}_2$, the sheaf inclusion given by $\phi^*((\underline \E_{j_{2}}/\underline \E_{j_{1}})(n_{1}))\subset\hat{\E}^{k+1}_{2}$ gives a splitting   
$ \iota^*\hat{\E}^{k+1}_2=(\underline \E_{j_{2}}/\underline \E_{j_{1}})(n_{1})\oplus \iota^*\hat{\E}^{k+1}_3$. Then one can keep doing this and finally get a splitting of $\underline{\hat \E}^{k+1,1}$ as claimed above.

Now one can repeat the process with $\hat{\E}^{k+1,1}$ to get $\hat{\E}^{k+1,2}$ by doing Hecke transform along $\oplus_{i=1}^k (\underline\E_{j_i}/\underline\E_{j_{i-1}})(n_{i-1})\oplus (\underline \E_{j_{k+1}}/\underline \E_{j_k})(n_k)$ again if necessary and finally get $\hat{\E}^{k+1}:=\hat{\E}^{k+1,n_{k+1}}$ satisfying properties $(a)_{k+1}$ and $(b)_{k+1}$. This finishes the proof. 
\end{proof}
\begin{rmk}\label{splitting}
When the Harder-Narasimhan filtration of $\underline{\E}$ has length equal to $2$, i.e. 
$$
0=\underline\E_0 \subset \underline \E_1 \subset \underline\E_2=\underline \E,
$$
 the same argument shows that there exists an optimal extension $\hat{\E}$ so that $\underline{\hat \E}=\underline \E_1 \oplus (\underline\E_2/\underline \E_1)(k)$ for some integer $k$ with $\mu_1-1<\mu_2-k\leq \mu_1$. In general, one should not expect to get an optimal extension of which the restriction splits as a direct sum of semistable torsion free sheaves by Theorem \ref{main} (III) and Corollary \ref{lemma1.14}.
\end{rmk}
 
\section{Examples}\label{Examples}

In this section, we apply Theorem \ref{main} to study some interesting examples.
 
\

\noindent\textbf{Example 1.} Consider $\underline \E \rightarrow \C\P^2$ given by the following exact sequence 
$$
0\rightarrow \O\xrightarrow{\sigma} \O(1)\oplus \O(1)\oplus \O(3) \rightarrow \underline\E
\rightarrow 0,$$
where $\sigma=(z_1,z_2, z_3^k)$. Consider $\E=\psi_*\pi^*\underline \E$. Then we have (see Section $5$ in \cite{CS2})
\begin{itemize}
\item if $k=1$, $\underline \E$ is stable;
\item if $k=2$, $\underline \E$ is semistable;
\item if $k\geq 3$, $\underline \E$ is unstable. The Harder-Narasimhan filtration of $\underline \E$ (which is the same as the Harder-Narasimhan-Seshadri filtration in this case) is given by
$0\subset\underline \E_1 \subset \underline \E_2=\underline \E$ where $\underline \E_1\cong \O(k)$ and $\underline \E_2 /\underline \E_1 \cong \mathcal{I}_{[0:0:1]}(2)$. 
\end{itemize}
By Theorem \ref{main}, when $k\leq 2$, there exists a unique optimal extension given by $\phi^*\underline \E$ (up to equivalence). When $k\geq 3$,  
by Remark \ref{splitting}, there exists an optimal extension $\hat{\E}$ of which the restriction is given by $\O(2)\oplus \mathcal{I}_{[0,0,1]}(2)$. Then again by Theorem \ref{main}, $\hat{\E}$ is the unique one up to equivalence since $\O(2)\oplus \mathcal{I}_{[0,0,1]}(2)$ is semistable.  These are compatible with our study of analytic tangent cones in \cite{CS1, CS2}. 

\

The next is an example where there are two optimal extensions, for which one of them has a locally free algebraic tangent cone while the other has an essential point singularity. 

\

\noindent\textbf{Example 2.} Consider a vector bundle $\underline\E \rightarrow \C\P^{3}$ given by the following
\begin{equation}\label{eqn2.1}
0\rightarrow \O\xrightarrow{\sigma}  \O(1)^{\oplus 3}\oplus \O(2)\rightarrow \underline\E \rightarrow 0.
\end{equation}
where $\sigma=(z_1, z_2, z_3, z_4^2)$. Let $\E:=\psi_*\pi^*\underline \E$. Then $\hat{\E}:=\phi^*\underline \E$ is an optimal extension of $\E$ at $0$ with $\Phi(\hat{\E})=\frac{1}{2}$. The Harder-Narasimhan filtration of $\underline{\hat\E}$ is given by $ \underline\E_1\cong \O(2)$ and $\underline \E_2=\underline\E$. Furthermore, $\underline{\E}_2/\underline{\E}_1$ fits into the following exact sequence 
$$
0\rightarrow \O\xrightarrow{\sigma'} \O(1)^{\oplus 3} \rightarrow \underline \E_2 /\underline \E_1\rightarrow 0
$$
where $\sigma'=(z_1, z_2, z_3)$. In particular, $\underline \E_2/\underline \E_1$ is a stable reflexive sheaf with an essential point singularity
at $[0,0,0,1]$. Let $\hat{\E}^1$ be the Hecke transform of $\hat{\E}$ along $\underline\E_1$ which is again an optimal extension. By Remark \ref{splitting}, $\underline{\hat \E}^1= \underline{\E}_1\oplus (\underline{\E}_2/\underline \E_1)(1)$. In particular, $\hat{\E}^1$ is an optimal extension of which the restriction splits as a direct sum of stables sheaves which has an essential point singularity. 

\

\end{document}